\newtheorem{theorem}{Theorem}
\newtheorem{proposition}{Proposition}
\newtheorem{corollary}{Corollary}
\newtheorem{lemma}{Lemma}
\theoremstyle{definition}
\theoremstyle{remark}
\def\11{\mathds{1}}
\def\E{\mathbb{E}}
\def\P{\mathbb{P}}
\def\R{\mathbb{R}}
\def\N{\mathbb{N}}
\def\d{\partial}
\def\cF{{\cal F}}
\def\cG{{\cal G}}
\begin{document}

\title{Non-failable approximation method for conditioned distributions}
\author{William O\c{c}afrain$^{1}$ and Denis Villemonais$^{1,2,3}$}

\footnotetext[1]{\'Ecole des Mines de Nancy, Campus ARTEM, CS 14234, 54042 Nancy Cedex, France}
\footnotetext[2]{IECL, Universit\'e de Lorraine, Site de Nancy, B.P. 70239, F-54506 Vandœuvre-lès-Nancy Cedex, France}
\footnotetext[3]{Inria, TOSCA team, Villers-l\`es-Nancy, F-54600, France.\\
  E-mail: william.ocafrain9@etu.univ-lorraine.fr, denis.villemonais@univ-lorraine.fr}

\maketitle

\begin{abstract}
We consider a general method for the approximation of the distribution of a process conditioned to not hit a given set. Existing methods are based on particle system that are failable, in the sense that, in many situations, they are not well defined after a given random time. We present a method based on a new particle system which is always well define. Moreover, we provide sufficient conditions ensuring that the particle method converges uniformly in time. We also show that this method provides an approximation method for the quasi-stationary distribution of Markov processes. Our results are illustrated by their application to a neutron transport model.
\end{abstract}

\noindent\textit{Keywords:}{ Particle system; process with absorption; Approximation method for degenerate processes}

\medskip\noindent\textit{2010 Mathematics Subject Classification.} Primary: {37A25; 60B10; 60F99}. Secondary: {60J80}

\section{Introduction}

This article is concerned with the approximation of the distribution of Markov processes conditioned to not hit a given absorbing state. Let $X$ be a discrete time Markov process evolving in a state space $E\cup\{\d\}$, where $\d\notin E$ is an absorbing state, which means that
\begin{align*}
X_n=\d,\ \forall n\geq \tau_\d,
\end{align*}
where $\tau_\d=\min\{n\geq 0,\ X_n=\d\}$. Our first aim is to provide an approximation method based on an interacting particle system for the conditional distribution
\begin{align}
\label{eq:intro:cond-dist}
\P_\mu(X_n\in \cdot\mid n<\tau_\d)=\frac{\P_\mu(X_n\in\cdot \text{ and } n<\tau_\d)}{\P_\mu(n<\tau_\d)},
\end{align}
where $\P_\mu$ denotes the law of $X$ with initial distribution $\mu$ on $E$.
Our only assumption to achieve our aim will be that survival during a given finite time is possible from any state $x\in E$, which means that
\begin{align}
\label{eq:intro:assu}
\P_x(\tau_\d>1)=\P_x(X_1\in E)>0,\ \forall x\in E.
\end{align}
Our second aim is to provide a general condition ensuring that the approximation method is uniform in time. The main assumption will be that there exist positive constants $C$ and $\gamma$ such that, for any initial distributions $\mu_1$ and $\mu_2$,
\begin{align*}
\left\|\P_{\mu_1}(X_n\in\cdot\mid n<\tau_\d)-\P_{\mu_2}(X_n\in\cdot\mid n<\tau_\d)\right\|_{TV}\leq Ce^{-\gamma n},\ \forall n\geq 0.
\end{align*}
This property has been extensively studied in~\cite{CV2016PTRF}. In particular, it is known to imply the existence of a unique quasi-stationary distribution for the process $X$ on $E$. Another main result of our paper is that, under mild assumptions, the approximation method can be used to estimate this quasi-stationary distribution.

The na\"ive Monte-Carlo approach to approximate such distributions would be to consider $N\gg 1$ independent interacting particles $X^1,\ldots,X^N$ evolving following the law of $X$ under $\P_\mu$ and to use the following asymptotic relation
\begin{align*}
\P_\mu(X_n\in\cdot \text{ and } n<\tau_\d)\simeq_{N\rightarrow\infty} \frac{1}{N}\sum_{i=1}^N \delta_{X^i_n}(\cdot\cap E)
\end{align*}
and then
\begin{align*}
\P_\mu(X_n\in\cdot \mid n<\tau_\d) \simeq_{N\rightarrow\infty} \frac{\sum_{i=1}^N \delta_{X^i_n}(\cdot\cap E)}{\sum_{i=1}^N \delta_{X^i_n}(E)}.
\end{align*}
However, the number of particles remaining in $E$ typically decreases exponentially fast, so that, at any time $n\geq 0$, the actual number of particles that are used to approximate $\P_\mu(X_n\in\cdot \mid n<\tau_\d)$ is of order $e^{-\lambda n}N$ for some $\lambda>0$. 
As a consequence, the variance of the right hand term typically grows exponentially fast and then the precision of the Monte-Carlo method  worsens dramatically over time. In fact, for a finite number of particles $N$, the number of particles $X^i_n$ belonging to $E$ eventually vanishes in finite time with probability one. Thus the right hand term in the above equation eventually becomes undefined. Since we're typically interested in the long time behavior of~\eqref{eq:intro:cond-dist} or in methods that need to evolve without interruption for a long time, the na\"ive Monte Carlo method is definitely not well suited to fulfill our objective.

In order to overcome this difficulty, modified Monte-Carlo methods have been introduced in the recent past years by~Del Moral for discrete time Markov processes (see for instance~\cite{DelMoral1998,DelMoral2003} or the well documented web page~\cite{DelMoralWP}, with many applications of such modified Monte-Carlo method).   The main idea is to consider independent particles $X_1,\ldots,X_N$ evolving in $E$ following the law of $X$, but such that, at each time $n\in\N$, any absorbed particle is re-introduced to the position of one other particle, chosen uniformly among those remaining in $E$; then the particles evolve independently from each others and so on. While this method is powerful, one drawback is that, at some random time $T$, all the particles will eventually be absorbed simultaneously. At this time, the interacting particle system is stopped and there is no natural way to reintroduce all the particles at time $T+1$. When the number of particles is large and the probability of absorption is uniformly bounded away from zero, the time $T$ is typically very large and this explain the great success of this method. However, many situations does not enter the scope of these assumptions, such as diffusion processes picked at discrete times or the neutron transport approximation (see Section~\ref{sec:example_neutron}). Our method is non-failable in these situations. Moreover the uniform convergence theorem provided in Section~\ref{sec:main2} also holds in these cases, under suitable assumptions.

When the underlying process is a continuous time process, one alternative to the methods of~\cite{DelMoral1998} has been introduced recently. The idea is to consider a continuous time $N$-particles system, where the particles evolve independently until one (and only one) of them is absorbed. At this time, the unique absorbed particle is re-introduced to the position of one other particle, chosen uniformly among those remaining in $E$. This continuous time system, introduced  by~Burdzy, Holyst, Ingermann and March (see for instance~\cite{Burdzy1996}), can be used to approximate the distribution of diffusion processes conditioned not to hit a boundary. Unfortunately, it  yields two new difficulties. The first one is that it only works if the number of jumps does not explode in finite time almost surely (which is not always the case even in non-trivial situations, see for instance~\cite{BieniekBurdzyPal2012}). The second one is that, when it is implemented numerically, one has to compute the exact absorption time of each particles, which can be cumbersome for diffusion processes and complicated boundaries. Note that, when this difficulties are overcome, the empirical distribution of the process is known to converge to the conditional distribution (see for instance the general result~\cite{Villemonais2014} and the particular cases handled in~\cite{Burdzy2000,Grigorescu2004,
Rousset2006,Ferrari2007,Villemonais2011,
Villemonais2015,Asselah2016}). 

Finally, it appears that both methods are not  applicable in the generality we aim to achieve in the present paper and, in some cases, both method will fail (as in the case of the neutron transport example of Section~\ref{sec:example_neutron}). Let us now describe the original algorithm studied in the present paper.

\medskip

Fix $N\geq 2$. The particle system that we introduce is a discrete time Markov process $(X^1_n,\ldots,X^N_n)_{n\in\N}$ evolving in $E^N$. We describe its dynamic between two successive times $n$ and $n+1$, knowing $(X^1_n,\ldots,X^N_n)\in E^N$, by considering the following random algorithm which act on any $N$-uplet of the form $$y=((x_1,b_1),\ldots,(x_N,b_N))\in (E\times\{0,1\})^N.$$

\medskip\noindent \textbf{Algorithm 1.} Initiate $y$ by setting $x_i=X^i_n$ and $b_i=0$ for all $i\in\{1,\ldots,N\}$ and repeat the following steps until $b_i=1$ for all $i\in\{1,\ldots,N\}$.
\begin{enumerate}
\item Choose randomly an index $i_0$ uniformly among $\{i\in\{1,\ldots,N\},\,b_i=0\}$
\item Choose randomly a position $Z\in E$ according to $\P_{x_{i_0}}(X_1\in\cdot)$. Then
\begin{itemize}
\item If $Z=\d$, chose an index $j_0$ among $\{1,\ldots,N\}\setminus \{i_0\}$ and replace $(x_{i_0},b_{i_0})$ by $(x_{j_0},b_{j_0})$ in $y$.
\item If $Z\neq\d$, replace $(x_{i_0},b_{i_0})$ by $(Z,1)$ in $y$.
\end{itemize}
\end{enumerate}
After a (random) finite number of iterations, the $N$-uplet $y$ will satisfy $b_i=1$ for all $i\in\{1,\ldots,N\}$. When this is achieved, we set $(X^1_{n+1},\ldots,X^N_{n+1})=(x_1,\ldots,x_N)$.

\bigskip
Our first main result, stated in Section~\ref{sec:main1}, is that, for all $n\geq 0$, the empirical distribution of the particle system evolving following the above dynamic actually converges to the conditional distribution of the original process $X$ at time $n$. We prove this result by building a continuous time Markov process $(Y_t)_{t\in[0,+\infty[}$ such that $Y_n$ is distributed as $X_n$ for all entire time $n\in\N$, and such that the general convergence result of~\cite{Villemonais2014} applies. 

Our second main result, stated in Section~\ref{sec:main2}, shows that, if the conditional distribution of the process $X$ is exponentially mixing (in the sense of~\cite{CV2016PTRF} or~\cite{CV2016a} for the time-inhomogeneous setting) and under a non-degeneracy condition that is usually satisfied, then the approximation method converges uniformly in time.

In Section~\ref{sec:example_neutron}, we illustrate our method by proving that it applies to neutron transport process absorbed at the boundary of an open set $D$.

\section{Convergence of fixed time marginals}
\label{sec:main1}

In this section, we consider the particle system defined by Algorithm~1. We state and prove our main result in a general setting.

\begin{theorem}
\label{thm:intro-main}
Assume that $\mu_0^N:=\frac{1}{N}\sum_{i=0}^N \delta_{X^i_0}$ converges in law to a probability measure $\mu_0$ on $E$. Then, for any $n \in \mathbb{N}$ and any bounded continuous function $f:E\rightarrow \R$,
\begin{equation*}
\frac{1}{N} \sum_{i=1}^N f(X^i_n) \xrightarrow[N\rightarrow\infty]{law} \mathbb{E}_{\mu_0}(f(X_n) | n<\tau_\partial).
\end{equation*}
Moreover,
\begin{align}
\label{eq:conv-rate}
\E\left|\frac{1}{N} \sum_{i=1}^N f(X^i_n) - \mathbb{E}_{\mu_0^N}(f(X_n) | n<\tau_\partial)\right|\leq \frac{2(1+\sqrt{2})\|f\|_\infty}{\sqrt{N}}\,\E\left(\frac{1}{\P_{\mu_0^N}\left(n<\tau_\d\right)}\right)
\end{align}
\end{theorem}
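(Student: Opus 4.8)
The plan is to realize the discrete-time particle system of Algorithm~1 as the time-$n$ marginals of a continuous-time interacting particle process to which the general convergence result of~\cite{Villemonais2014} can be applied. Concretely, I would construct a continuous-time Markov process $(Y_t)_{t\geq 0}$ on $E$ such that $(Y_n)_{n\in\N}$ has the same law as $(X_n)_{n\in\N}$; the natural choice is to interpolate the discrete chain by a pure-jump process whose embedded chain is $X$ and whose holding times are chosen so that exactly one jump (with probability bounded below) occurs in each unit interval, i.e.\ one runs the chain ``at rate governed by a Poisson-type clock'' inside each slot. Equivalently, and more in the spirit of the paper, one introduces a continuous-time absorbed process $Y$ together with a Fleming--Viot type $N$-particle system in which a single particle is resampled at each absorption onto the position of a uniformly chosen survivor; Algorithm~1 is then precisely the description of what this Fleming--Viot system does between integer times $n$ and $n+1$, the inner \texttt{repeat} loop being the sequential processing of the (a.s.\ finite, by~\eqref{eq:intro:assu} and $N\geq 2$) list of absorption events in that slot. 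Once this identification is made, the convergence $\frac1N\sum_i f(X^i_n)\to \E_{\mu_0}(f(X_n)\mid n<\tau_\d)$ in law follows directly from~\cite{Villemonais2014} applied to $Y$ at time $t=n$, given that $\mu_0^N\to\mu_0$.

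For the quantitative bound~\eqref{eq:conv-rate}, I would not go through the continuous-time machinery but argue directly on the particle system, conditionally on the initial configuration $(X^1_0,\dots,X^N_0)$, so that $\mu_0^N$ is fixed and $\P_{\mu_0^N}(n<\tau_\d)$ is a constant. The key structural fact to extract from the construction is a \emph{many-to-one} (or ``tagged particle'') formula: the particle system admits a coupling with $N$ independent copies of the process $X$ started from $\mu_0^N$, together with a change of measure accounting for the resamplings, whose Radon--Nikodym density is controlled by $1/\P_{\mu_0^N}(n<\tau_\d)$. More precisely, one shows that for any bounded $f$,
\begin{align*}
\E\left[\left(\frac1N\sum_{i=1}^N f(X^i_n) - \E_{\mu_0^N}(f(X_n)\mid n<\tau_\d)\right)^2 \,\Big|\, \mathcal{F}_0\right] \leq \frac{C\|f\|_\infty^2}{N}\,\frac{1}{\P_{\mu_0^N}(n<\tau_\d)^2},
\end{align*}
typically by a martingale decomposition along the successive resampling steps: each resampling replaces one particle's contribution, the associated conditional increments of $\frac1N\sum_i f(X^i_\cdot)$ form a martingale difference sequence of size $O(\|f\|_\infty/N)$, and the total number of such steps over one unit interval has expectation controlled by the reciprocal survival probability (each absorbed particle, once reintroduced, must again attempt to survive). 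Summing the conditional variances and then applying Cauchy--Schwarz (this is where the factor $2(1+\sqrt2)$ is tracked) gives the $L^1$ bound; taking expectation over $\mathcal{F}_0$ yields~\eqref{eq:conv-rate} with the $\E(1/\P_{\mu_0^N}(n<\tau_\d))$ on the right.

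The main obstacle, I expect, is making the ``change of measure / many-to-one'' argument rigorous for Algorithm~1 as stated, because the inner loop processes absorptions \emph{sequentially} and the reference index $j_0$ is chosen among \emph{all} other particles (including already-processed ones with $b_j=1$), so the natural filtration is the fine-grained one generated by the individual steps of the loop, not just the slot-by-slot filtration. One must check that, despite this sequential bookkeeping, the slot-$n$-to-slot-$(n+1)$ transition kernel is exactly the Fleming--Viot kernel (so that the clean formula for $\E_{\mu_0^N}(f(X_n)\mid n<\tau_\d)$ and the reciprocal-survival bound on the number of resamplings both apply), and that the loop terminates a.s.\ with all $b_i=1$ — which uses $N\geq 2$ (so there is always a reference particle) together with~\eqref{eq:intro:assu} guaranteeing that each $b_i$ becomes $1$ after a geometric number of attempts. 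Verifying this equivalence carefully, and organizing the martingale-difference estimate around the right filtration, is the technical heart; the remaining estimates are the routine second-moment computations one would rather not grind through here.
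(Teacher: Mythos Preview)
Your plan for the convergence in law is essentially the paper's: build a continuous-time process whose integer-time marginals coincide with $X$, identify Algorithm~1 with the associated Fleming--Viot system, and invoke~\cite{Villemonais2014}. The paper's concrete construction is $Y_t=X_{[t]}\mathbf{1}_{t<u_{[t]}}+X_{[t]+1}\mathbf{1}_{t\geq u_{[t]}}$ with $u_n$ uniform on $[n,n+1]$ (not a Poisson clock), augmented by the flag $B_t$ so that $Z_t=(t,Y_t,B_t)$ is Markov; the only substantive verification is that the number of rebirths $A^N_t$ is a.s.\ finite, which the paper gets exactly as you say, via the bound $\max_i\P_{Y^i_0}(X_1=\partial)<1$ and a geometric argument.

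Where you diverge is the quantitative bound~\eqref{eq:conv-rate}. You propose to abandon the continuous-time embedding and re-derive the $L^1$ estimate from scratch via a many-to-one formula and a martingale-difference decomposition along resampling steps. The paper does \emph{not} do this: once the Fleming--Viot identification is in place and $A^N_t<\infty$ a.s.\ is checked, \cite[Theorem~2.2]{Villemonais2014} delivers \emph{both} the convergence in law \emph{and} the explicit bound with constant $2(1+\sqrt{2})$ simultaneously, for deterministic $\mu_0^N$; one then just integrates over the law of $\mu_0^N$. So your second half is not wrong in spirit, but it is extra work that duplicates the content of the cited theorem, and your sketch (variance controlled by reciprocal survival probability via counting resamplings) is precisely the mechanism behind that theorem---you would effectively be reproving it, and recovering the specific constant $2(1+\sqrt{2})$ would force you through the same unnormalized-measure/penalized-semigroup computation anyway. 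The cleaner move is to recognize that the continuous-time embedding buys you the rate as well, not just the limit.
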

 We emphasize that our result applies to any process $X$ satisfying~\eqref{eq:intro:assu}, overcoming the limitations of all previously cited particle approximation methods, as illustrated by the application to a neutron transport process in Section~\ref{sec:example_neutron}.

\begin{proof}[Proof of Theorem~\ref{thm:intro-main}]

The proof is divided in two steps. First, we provide an implementation of Algorithm~1 as the discrete time included chain of a continuous time Fleming-Viot type particle system. In particular, this step provides a mathematically tractable implementation of  Algorithm~1. In a second step, we use existing results on Fleming-Viot type particle systems to deduce that the empirical distribution of the particle system converges to the conditional distribution~\eqref{eq:intro:cond-dist}.

\bigskip\noindent\textit{Step 1 : Algorithm~1 as a Fleming-Viot type process}\\
Let us introduce the continuous time process $(Y_t)_{t\in[0,+\infty[}$ defined, for any $t \in[0,+\infty[$, by
\begin{equation*}
 Y_t = X_{[t]}1_{t<u_{[t]}} + X_{[t]+1}1_{t \geq u_{[t]}}
\end{equation*}
where $[\cdot]$ denotes the integer part and $(u_n)_{n \in \mathbb{N}}$ is a family of independent random variables such that, for all $n \in \mathbb{N}$, $u_n$ follows a uniform law on $[n,n+1]$. With this definition, $(Y_t)_{t\geq 0}$ is a non-Markovian continuous time process such that $Y_n$ and $X_n$ have the same law for all $n\in\N$.

Now, we define the continuous time process $(B_t)_{t \geq 0}$ by
\begin{equation*}
B_t = \underset{n \in \mathbb{N}}{\sum} 1_{t \in [u_n,n+1[},\ \quad\forall t\geq 0.
\end{equation*}
By construction, the continuous-time process $(Z_t)_{t\in[0,+\infty[}$ defined by 
\begin{align*}
Z_t=(t,Y_t,B_t),\ \forall t\in[0,+\infty[,
\end{align*}
 is a strong Markov process evolving in $F=\mathbb{R}_+ \times (E\cup\{\d\}) \times \{0,1\}$, with   absorbing set $\d_F:=\mathbb{R}_+ \times \{\partial\} \times \{0,1\}$ (see Figure~\ref{fig:graph} for an illustration when $E=\N$ and $\d=0$).

 \begin{figure}
\includegraphics[scale=0.8]{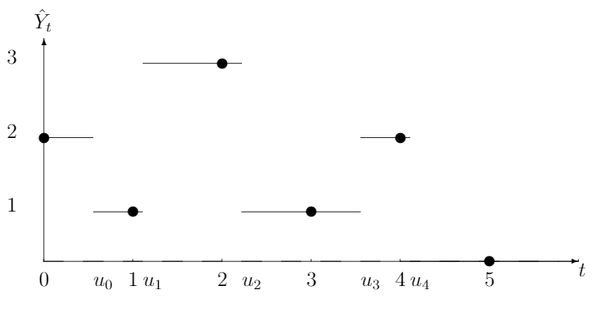}
\caption{The points represent the values taken by the Markov chain $X$ and the  thin lines are the trajectory of $Y$, which jumps from $X_n$ to $X_{n+1}$ at time $u_n$. At any time $t\in[n,n+1[$,  $B_t$ is equal to $1$ if the process has already jumped during any interval $[n, n+1]$ and is equal to $0$ otherwise.}
\label{fig:graph}
\end{figure}

Let us now define a Fleming-Viot type system whose particles evolve as independent copies of $Z$ between their absorption times. More precisely, fix $N\geq 2$ and consider the following continuous time Fleming-Viot type particle system, denoted by $(Z^{i})_{i \in \{1, \ldots, N\}}$, starting from $(z_1,\ldots,z_N)\in F^N$ and evolving as follows.
\begin{itemize}
\item The $N$ particles evolve as $N$ independent copies of $Z$ until one of them reaches $\d_F$. Note that it is clear from the definition of $Z$ that only one particle jumps at this time.
\item Then the unique killed particle is taken from the absorbing point $\d_F$ and is instantaneously placed at the position of an other particle chosen uniformly between the $N-1$ remaining ones; in this situation we say that the particle undergoes a \textit{rebirth}.
\item Then the particles evolve as independent copies of $Z$ until one of them reaches $\d_F$ and so on.
\end{itemize}

For all $t\in[0,+\infty[$, we denote by $A_t^{i,N}$ the number of rebirths of the $i^{th}$ particle occurring before time $t$ and by $A_t^{N}$ the total number of rebirths before the time $t$. Clearly,
\begin{equation*}
 A_t^{N} = \underset{i=1}{\overset{N}{\sum}} A_t^{i,N}\ \text{ almost surely.}
\end{equation*} 
Also, for all $t\in[0,+\infty[$, we set $Z^i_t=(t,Y^i_t,B^i_t)$, where $Y^i_t$ and $B^i_t$ are the marginal component of $Z^i$ in $E$ and $\{0,1\}$ respectively.

One can easily check that this Fleming-Viot system (considered at discrete times) is a particular implementation of the informal description of  Algorithm~1 in the introduction.
 Indeed, at any time $n\geq 0$, the Fleming-Viot system is defined so that
\begin{align*}
(Y^i_n,B^i_n)=(Y^i_n,0).
\end{align*}
Then, at each time $t\in[n,n+1[$, the index of the next moving particle $i\in\{1,\ldots,n\}$ belongs to the set of particles $j\in\{1,\ldots,N\}$ such that $B^j_n=0$. Moreover, conditionally to $B^j_t=0$,  the jumping times of these particles are independent and identically distributed (uniformly on $[t,n+1[$). As a consequence $i$ is chosen uniformly among these indexes (this is the first step of Algorithm~1). Then, at the jumping time $\tau$, the position $Y^i_\tau$ of the particle $i$ is chosen according to $\P_{Y^i_t}(X_1\in\cdot)$ and $B^i_\tau$ is set to $1$. If the position at time $\tau$ is $\d$, then the particle $i$ undergoes a rebirth and hence $(Y^i_\tau,B^i_\tau)$ is replaced by $(Y^k_\tau,B^k_\tau)=(Y^k_t,B^k_t)$, where $k$ is chosen uniformly among $\{1,\ldots,N\}\setminus \{i\}$. Hence the second step of Algorithm~1 is completed. Finally, the procedure is repeated until all the marginals $B^i$ are equal to $1$, as in Algorithm~1.

In particular, for any $n\geq 0$, the random variable $(X^1_n,\ldots,X^N_n)$ obtained from Algorithm~1 and the variable $(Y^1_n,\ldots,Y^N_n)$ obtained from the Fleming-Viot type algorithm have the same law.

\bigskip\noindent\textit{Step 2 : Convergence of the empirical system.}

In this step, we consider a sequence of initial positions $(Y^1_0,\ldots,Y^N_N)_{N\geq 2}$ such that $\frac{1}{N}\sum_{i=0}^N\delta_{Y^i_0}$ converges in law to a probability measure $\mu_0$ on $E$. Our aim is to prove that,  for any $n \in \mathbb{N}$ and any bounded continuous function $f:E\rightarrow \R$,
\begin{equation*}
\frac{1}{N} \sum_{i=1}^N f(Y^i_n) \xrightarrow[N\rightarrow\infty]{law} \mathbb{E}_{\mu_0}(f(Y_n) | n<\tau_\partial).
\end{equation*}
Note that, since $X_n$ and $Y_n$ share the same law, this immediately implies the first part of Theorem~\ref{thm:intro-main}.

Since $(Z^1,\ldots,Z^N)$ is a Fleming-Viot type process without simultaneous killings,  \cite[Theorem~2.2]{Villemonais2014} implies that it is sufficient to prove that, for all $N\geq 2$ and almost surely,
\begin{align}
\label{eq:assumption}
A^N_t<\infty,\ \forall t\geq 0,
\end{align}
where we recall that $A^N_t$ is number of rebirths undergone by the Fleming-Viot type system with $N$ particles before time $t$.

First, let us remark that
\begin{equation*}
\mathbb{P}(A_t^N = \infty) \leqslant \mathbb{P}(A_{[t]+1}^N = \infty).
\end{equation*}
 Moreover,
\begin{align*}
\mathbb{P}(A_{[t]+1}^N = \infty) &= \mathbb{P}\left(\bigcup_{i=0}^{[t]}\{A^N_{i+1}-A_i^N = \infty\}\right) 
 \leqslant \sum_{i=0}^{[t]}\mathbb{P}\left(A_{i+1}^N-A_i^N = \infty\right).
\end{align*}
Using the weak Markov property at time $i$, it is sufficient to prove that 
\begin{equation}
\label{eq:equalzero}
\mathbb{P}(A_{1}^N = \infty)=0
\end{equation}
for any initial distribution of the Fleming-Viot type process $(Y^1,\ldots,Y^N)$ in order to conclude that 
\begin{align*}
\P\left(A_{i+1}^N-A_i^N = \infty\right)=0,\ \forall i\geq 0.
\end{align*}
and hence that~\eqref{eq:assumption} holds true.

But $A_{1}^N = \infty$ if and only if there exists at least one particle for which there is an infinity of rebirths, hence
\begin{equation}
\mathbb{P}(A^N_t = \infty) \leqslant \underset{i=1}{\overset{N}{\sum}} \mathbb{P}\left(A_{1}^{i,N} = \infty\right)
\label{majoration}
\end{equation}
Now, when a particle undergoes a rebirth, it jumps on the position of one of the $N-1$ remaining particles. As a consequence, at any time $t\in[0,1[$, the position $Y^i_t$ of the particles $i\in\{1,\ldots,N\}$ such that $B^i_t=0$ are included in the set $\{Y^1_0,\ldots,Y^N_0\}$. In particular, the probability that such a particle undergoes a rebirth during its next move is bounded above by
\begin{align*}
c:=\max_{i\in\{1,\ldots,N\}} \P_{Y^i_0}\left(X_1=\d\right)<1.
\end{align*}
Hence, a classical renewal argument shows that the probability that a particle undergoes $n$ rebirths is bounded above by $c^n$. This implies that the probability that a particle undergoes an infinity of rebirths is zero. This, together with~\eqref{majoration} implies~\eqref{eq:equalzero}, which concludes the proof of the first part of Theorem~\ref{thm:intro-main}.

\medskip In order to conclude the proof, let us simply remark that, for a deterministic value of $\mu_0^N$, the inequality of Theorem~\ref{thm:intro-main} is directly provided by~\cite[Theorem~2.2]{Villemonais2014}. Now, if $\mu_0^N$ is a random measure, the inequality is obtained by integrating the deterministic case inequality with respect to the law of $\mu_0^N$. This concludes the proof of Theorem~\ref{thm:intro-main}.

\end{proof}

\section{Uniform convergence for uniformly mixing conditioned semi-groups}
\label{sec:main2}

In a recent paper~\cite{CV2016PTRF}, necessary and sufficient conditions on an absorbed Markov process $X$ were obtained to ensure that a process satisfies
\begin{align}
\label{eq:expo-cv}
\left\|\P_{\mu_1}(X_n\in\cdot\mid n<\tau_\d)-\P_{\mu_2}(X_n\in\cdot\mid n<\tau_\d)\right\|_{TV}\leq Ce^{-\gamma n},
\end{align}
where $\gamma$ and $C$ are positive constants. In particular this implies the existence of a unique quasi-stationary distribution $\nu_{QSD}$ for $X$, that is a unique probability measure on $E$ such that $\P_{\nu_{QSD}}(X_n\in\cdot\mid n<\tau_\d)=\nu_{QSD}$, for all $n\geq 0$. General and classical results on quasi-stationary distributions (see for instance~\cite{meleard-villemonais-12,vanDoorn2013,ColletMartinezSanMartin2013}) implies that there exists $\lambda_0>0$ such that
\begin{align}
\label{eq:lambda0}
\P_{\nu_{QSD}}(n<\tau_\d)=e^{-\lambda_0 n},\ \forall n\geq 0.
\end{align}

The exponential convergence property~\ref{eq:expo-cv} holds for a large class of processes, including birth and death processes with catastrophe, branching Brownian particles, neutron transport approximations processes (see~\cite{CV2016PTRF}), one dimensional diffusion with or without killing (see~\cite{CV2016b,CV2016c}), multi-dimen\-sional birth and death processes (see~\cite{CV2015PNMmd}) and multi-dimen\-sional diffusion processes (see~\cite{CCV2016}). Also, similar properties can be proved for time-inhomo\-geneous processes, as stressed in the recent paper~\cite{CV2016a}, with applications to time-inhomo\-geneous diffusion processes and birth and death processes in a quen\-ched random environment. 

In this section, we state and prove our second main result, which states that, if~\eqref{eq:expo-cv} holds and if, for any $n_0\geq 1$, there exists $\varepsilon_0>0$ such that
\begin{align}
\label{eq:additional_assumption}
\inf_{x\in E} \P_x(\P_{X_1}(n_0<\tau_\d)\geq \varepsilon_0\mid 1<\tau_\d)=c_0>0,
\end{align}
then the convergence of the empirical distribution of the particle system described in Algorithm~1 converges uniformly in time to the conditional distribution of the process $X$.

We emphasize that the additional assumption~\eqref{eq:additional_assumption} is true for many processes satisfying~\eqref{eq:expo-cv}, for instance in the case of one-dimensional diffusion processes, multidimensional diffusion processes or piecewise deterministic Markov processes (this the detailed examples of Section~\ref{sec:example_neutron}). 
 As far as we know, none of this processes were covered in this generality by previous methods. In particular, this is the first method that allows the approximation of the conditional distribution of the neutron transport approximation process (see  Section~\ref{sec:example_neutron}), since in this case it easy to check that, with probability one, all the particles will eventually hit the boundary at the same time when using previous algorithms.
The methods also allows to handle the case of the diffusion process on $E=(0,2]$ killed at $0$, reflected at $2$ and solution to the following stochastic differential equation
\begin{align*}
dX_t=dW_t+\frac{1}{\beta X_t^{\beta-1}},\ X_0\in(0,2],\ \beta>2.
\end{align*}
 In this case, the continuous time Fleming-Viot approximation method introduced in~\cite{Burdzy1996} explodes in finite time almost surely, as proved in~\cite{BieniekBurdzyPal2012}. As a matter of fact, it is not known if the Fleming-Viot type particle system is well defined as soon as the diffusion coefficient is degenerated or not regular toward the boundary $0$. On the contrary, our assumption holds true for fairly general one dimensional diffusion processes, thanks to the study provided in~\cite{CV2016b}. Hence our approximation method is valid and, using the next results, converges uniformly in time for both neutron transport processes and degenerate diffusion processes.
 
For any $n\in\N$, we define the empirical distribution of the process at time $n$ as $\mu_n^N = \frac{1}{N} \underset{i=1}{\overset{N}{\sum}} \delta_{X_n^i}$, and for any bounded measurable function $f$ on $E$, we set
\begin{align*}
\mu_n^N(f) = \frac{1}{N} \sum_{i=1}^{N} f(X_n^i).
\end{align*}
\smallskip

\begin{theorem}
\label{thm:main-two}
Assume that~\eqref{eq:expo-cv} and~\eqref{eq:additional_assumption} holds true. Then there exist two constants $C>0$ and $\alpha<0$, such that, for all $\delta>0$ and all measurable function $f:E\rightarrow\R$ bounded by $1$,
\begin{align*}
\E\left(\left|\mu^N_n(f)-\E_{\mu_0^N}(f(X_n)\mid n<\tau_\d)\right|\right)&\leq \frac{C N^\alpha}{\delta}+C\,\P\left(\P_{\mu^N_{0}}(n_0<\tau_\d)\leq \varepsilon_0\delta\right),\ \forall n\geq 0,
\end{align*}
with
\begin{align*}
\alpha=\frac{-\gamma}{2\left(\lambda_0+\gamma\right)}<0.
\end{align*}
\end{theorem}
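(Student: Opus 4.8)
Write $\Phi_n(\mu):=\P_\mu(X_n\in\cdot\mid n<\tau_\d)$ for the conditional (normalised) semigroup, $h_m(x):=\P_x(m<\tau_\d)$ and $G:=\{x\in E:\ h_{n_0}(x)\geq\varepsilon_0\}$; note $h_1\geq h_{n_0}$ since $n_0\geq1$. The proof rests on three elementary facts. (i) By consistency of successive conditionings, $\Phi$ is a semigroup, $\Phi_{n+m}=\Phi_m\circ\Phi_n$. (ii) Applying~\eqref{eq:expo-cv} with $\mu_2=\nu_{QSD}$, one has $|\Phi_j(\nu)(g)-\nu_{QSD}(g)|\leq\|g\|_\infty Ce^{-\gamma j}$ for every probability measure $\nu$, every $j\geq0$ and every bounded measurable $g$. (iii) By the Markov property of the particle system, Theorem~\ref{thm:intro-main} applied from time $k$ gives, conditionally on the configuration $\cF_k$ (for which $\mu^N_k$ is deterministic),
\[
\E\Bigl(\bigl|\mu^N_{k+m}(g)-\Phi_m(\mu^N_k)(g)\bigr|\ \Big|\ \cF_k\Bigr)\leq\frac{2(1+\sqrt2)\|g\|_\infty}{\sqrt N}\,\frac{1}{\P_{\mu^N_k}(m<\tau_\d)}.
\]

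Two deterministic lemmas come next. Writing, for $k\geq1$, $\Phi_k(\nu)$ as the mixture $\int_E\P_y(X_1\in\cdot\mid 1<\tau_\d)\,\rho(dy)$ over the law $\rho$ of the penultimate state (weighted by its one-step survival probability), assumption~\eqref{eq:additional_assumption} gives immediately
\[
\Phi_k(\nu)(G)\geq c_0\quad\text{for every probability measure }\nu\text{ and every }k\geq1.
\]
Then, from the factorisation $\P_\nu(m<\tau_\d)=\prod_{j=0}^{m-1}\Phi_j(\nu)(h_1)$, the identity $\nu_{QSD}(h_1)=e^{-\lambda_0}$ (by~\eqref{eq:lambda0}), fact (ii) — which makes the factors with $j$ large at least $\bigl(1-Ce^{\lambda_0-\gamma j}\bigr)e^{-\lambda_0}$, with convergent defect sum — and the previous display — which bounds the finitely many remaining factors with $j\geq1$ below by $\Phi_j(\nu)(h_1)\geq\varepsilon_0\Phi_j(\nu)(G)\geq\varepsilon_0 c_0$ — one obtains a constant $\kappa>0$ with
\[
\P_\nu(m<\tau_\d)\geq\kappa\,\P_\nu(1<\tau_\d)\,e^{-\lambda_0 m}\quad\text{for every }\nu\text{ and every }m\geq1.
\]
Taking $\nu=\delta_x$ with $x\in G$ gives $h_m(x)\geq\kappa\varepsilon_0 e^{-\lambda_0 m}$, hence $\P_\nu(m<\tau_\d)=\nu(h_m)\geq\tfrac{\kappa c_0\varepsilon_0}{2}e^{-\lambda_0 m}$ as soon as $\nu(G)\geq c_0/2$.

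The last ingredient is a non-degeneracy property of the empirical measure. Since each particle produced during one step of Algorithm~1 is, conditionally on its ``launching site'' in the previous configuration, distributed as $\P_\cdot(X_1\in\cdot\mid 1<\tau_\d)$, assumption~\eqref{eq:additional_assumption} yields $\E(\mu^N_k(G)\mid\cF_{k-1})\geq c_0$; a concentration estimate relying on the conditional independence of the particle updates given the rebirth genealogy then gives $\P(\mu^N_k(G)<c_0/2)\leq\psi(N)$ for every $k\geq1$, with $\psi(N)\leq C/N$ (a bound that suffices, and which even a crude second-moment estimate provides since rebirths create only $O(N)$ pairwise interactions per step). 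Now fix $\alpha=-\gamma/(2(\lambda_0+\gamma))$ and $s=s(N):=\lceil(\ln N)/(2(\lambda_0+\gamma))\rceil$, so that $e^{-\gamma s}$ and $e^{\lambda_0 s}/\sqrt N$ are both of order $N^\alpha$. If $1\leq n\leq s$, fact (iii) from time $0$ bounds the left-hand side of the theorem by $\frac{C}{\sqrt N}\E(1/\P_{\mu^N_0}(n<\tau_\d))$; on $\{\P_{\mu^N_0}(n_0<\tau_\d)>\varepsilon_0\delta\}$ the second lemma (with $\P_{\mu^N_0}(1<\tau_\d)\geq\P_{\mu^N_0}(n_0<\tau_\d)$) together with $n\leq s$ gives $\P_{\mu^N_0}(n<\tau_\d)\geq\kappa\varepsilon_0\delta e^{-\lambda_0 s}$, hence a contribution $\leq C'N^\alpha/\delta$, whereas on the complementary event the trivial bound $2\|f\|_\infty$ contributes $\leq 2\,\P(\P_{\mu^N_0}(n_0<\tau_\d)\leq\varepsilon_0\delta)$. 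If $n\geq s+1$, write, using (i),
\[
\mu^N_n(f)-\Phi_n(\mu^N_0)(f)=\bigl(\mu^N_n(f)-\Phi_s(\mu^N_{n-s})(f)\bigr)+\bigl(\Phi_s(\mu^N_{n-s})(f)-\Phi_s(\Phi_{n-s}(\mu^N_0))(f)\bigr):
\]
by~\eqref{eq:expo-cv} the second bracket is $\leq\|f\|_\infty Ce^{-\gamma s}\leq CN^\alpha$; for the first bracket, fact (iii) from time $n-s$ gives $\frac{C}{\sqrt N}\E(1/\P_{\mu^N_{n-s}}(s<\tau_\d))$, which on $\{\mu^N_{n-s}(G)\geq c_0/2\}$ (of probability $\geq1-\psi(N)$, since $n-s\geq1$) is $\leq\frac{2C}{\kappa c_0\varepsilon_0}\,e^{\lambda_0 s}/\sqrt N\leq C'N^\alpha$, while on its complement the trivial bound contributes $\leq2\psi(N)\leq CN^\alpha$. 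Adding the estimates and merging constants — enlarging $C$ to cover the finitely many small $N$ and the range $\delta\geq1/\varepsilon_0$, where the claimed right-hand side is already $\geq C$ — gives the theorem (the case $n=0$ being trivial).

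The delicate point is the non-degeneracy input $\P(\mu^N_k(G)<c_0/2)\to0$: it is precisely what forces the time scale $s\asymp\ln N$ responsible for the exponent $\alpha$, and it requires a genuine concentration statement for the empirical measure of the interacting system. I would obtain it by making explicit, as in Step~1 of the proof of Theorem~\ref{thm:intro-main}, the conditional independence of the individual particle updates over one time step given the launching sites and the rebirth genealogy, and applying Hoeffding's inequality (or, as noted, a direct second-moment estimate); the subtlety there is to check that the rebirth mechanism cannot concentrate too many particles onto a single successful move within one step.
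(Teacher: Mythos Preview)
Your overall architecture is the paper's: split at a window $s\asymp \ln N/(2(\lambda_0+\gamma))$, use~\eqref{eq:expo-cv} for the outer piece and the rate~\eqref{eq:conv-rate} from an intermediate time for the inner one. Your deterministic survival bounds $\Phi_k(\nu)(G)\geq c_0$ and $\P_\nu(m<\tau_\d)\geq\kappa\,\P_\nu(1<\tau_\d)e^{-\lambda_0 m}$ are also essentially what the paper extracts from~\cite{CV2016PTRF}.

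The genuine gap is exactly where you flag it. The concentration input $\P(\mu^N_k(G)<c_0/2)\leq C/N$ is \emph{false} under the stated hypotheses, and neither Hoeffding nor a second-moment bound can save it. Your claim that ``rebirths create only $O(N)$ pairwise interactions per step'' tacitly assumes $\inf_x\P_x(1<\tau_\d)>0$, which is precisely what the paper does \emph{not} assume (and which fails in the neutron transport example). When $\P_{\mu^N_{k-1}}(1<\tau_\d)$ is tiny, almost every attempted move in Algorithm~1 fails; the first particle to succeed is then copied, via rebirths onto $b{=}1$ particles, by essentially all the others. In the limit one gets $\mu^N_k=\delta_Z$ for a single random $Z\sim\P_y(X_1\in\cdot\mid 1<\tau_\d)$, so $\mu^N_k(G)=\mathbf{1}_{Z\in G}$ and $\P(\mu^N_k(G)=0)$ can be as large as $1-c_0$ \emph{regardless of $N$}. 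More structurally: conditionally on the rebirth genealogy one has $\mu^N_k=\sum_j w_j\delta_{Z_j}$ with the $Z_j$ independent given their launching sites, but the weights satisfy only $\sum_j w_j=1$ and can concentrate on a single index, so the conditional variance $\sum_j w_j^2\,\mathrm{Var}(\mathbf{1}_{Z_j\in G})$ is $O(1)$, not $O(1/N)$.

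The paper replaces your concentration claim by Lemma~\ref{lem:controle_pos_part}, which has two parts. The first part is your desired $O(1/\sqrt N)$ bound, but only under the \emph{a priori} assumption $\P_{\mu^N_0}(1<\tau_\d)>\varepsilon$; it comes straight from~\eqref{eq:conv-rate} and Markov's inequality. The second part, valid from \emph{any} configuration, asserts merely that $\P(\mu^N_1(h_{n_0})>\delta\varepsilon_0)\geq p_0$ for a fixed $p_0>0$ independent of $N$; its proof tracks the count $N'_k$ of ``good'' particles through the successive steps of Algorithm~1 and dominates the ratio $N'_k/N_k$ from below by a P\'olya-urn martingale, whose limit is positive with positive probability. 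These two parts feed a one-step recursion
\[
\P\bigl(\mu^N_{k+1}(h_{n_0})\leq\varepsilon\bigr)\ \leq\ \frac{C}{\varepsilon\sqrt N}\ +\ (1-p_0)\,\P\bigl(\mu^N_{k}(h_{n_0})\leq\varepsilon\bigr),
\]
which iterates to $C/(p_0\varepsilon\sqrt N)+(1-p_0)^k\,\P(\mu^N_0(h_{n_0})\leq\varepsilon)$. This is the correct substitute for your $\psi(N)$; note that it carries the initial-condition term into the final bound at \emph{every} $n$, not only for $n\leq s$, which is why that term appears unrestrictedly in the theorem's statement.
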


In the case where the initial position of the particle system are drawn as independent  random variables distributed following the same law $\mu$, then, choosing $\delta>0$ small enough so that $\P_\mu(n_0<\tau_\d)\geq 2\varepsilon_0\delta$, basic concentration inequalities and the equality $\E(\P_{X^i_0}(n_0<\tau_\d))=\P_\mu(n_0<\tau_\d)$  imply that
\begin{align*}
\P\left(\P_{\mu^N_{0}}(n_0<\tau_\d)\leq \varepsilon_0\delta\right)&\leq \P\left(\P_{\mu}(n_0<\tau_\d)-\frac{1}{N}\sum_{i=1}^N\P_{X^i_0}(n_0<\tau_\d)\geq \varepsilon_0\delta\right)\\
&\leq  e^{-N\beta},
\end{align*}
for some $\beta>0$. This implies the following corollary.

\begin{corollary}
Assume that $(X^1_0,\ldots,X^N_0)$ are independent and identically distributed following a given law $\mu$ on $E$. If~\eqref{eq:expo-cv} and~\eqref{eq:additional_assumption} hold true, then there exist two constants $C_\mu>0$ and $\alpha<0$ such that,  and all measurable function $f:E\rightarrow\R$ bounded by $1$,
\begin{align*}
\E\left(\left|\mu^N_n(f)-\E_{\mu_0^N}(f(X_n)\mid n<\tau_\d)\right|\right)&\leq C_\mu N^\alpha,\ \forall n\geq 0,
\end{align*}
where $\alpha$ is the same constant as in Theorem~\ref{thm:main-two}.
\end{corollary}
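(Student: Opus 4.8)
The plan is to derive the corollary directly from Theorem~\ref{thm:main-two} by controlling the remaining probability term $\P\left(\P_{\mu^N_0}(n_0<\tau_\d)\leq \varepsilon_0\delta\right)$ through a concentration inequality, exactly as sketched in the paragraph preceding the statement. First I would fix $n_0\geq 1$ and the constants $\varepsilon_0>0$, $c_0>0$ associated to $n_0$ by assumption~\eqref{eq:additional_assumption}; these are the same $n_0,\varepsilon_0$ appearing in Theorem~\ref{thm:main-two}. Since $(X^1_0,\ldots,X^N_0)$ are i.i.d. with law $\mu$, I would set $p:=\P_\mu(n_0<\tau_\d)=\E\bigl(\P_{X^i_0}(n_0<\tau_\d)\bigr)$. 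Note $p>0$: indeed~\eqref{eq:intro:assu} iterated $n_0$ times ensures $\P_x(n_0<\tau_\d)>0$ for every $x$, so its $\mu$-average is strictly positive. I would then choose $\delta>0$ small enough that $\varepsilon_0\delta\leq p/2$, which is possible precisely because $p>0$; this choice of $\delta$ depends on $\mu$ (through $p$) but not on $N$ or $n$.

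Next I would write $\P_{\mu^N_0}(n_0<\tau_\d)=\frac{1}{N}\sum_{i=1}^N \P_{X^i_0}(n_0<\tau_\d)$, an average of i.i.d.\ random variables bounded in $[0,1]$ with common mean $p$. On the event $\{\P_{\mu^N_0}(n_0<\tau_\d)\leq \varepsilon_0\delta\}$ we have $p-\frac{1}{N}\sum_{i=1}^N\P_{X^i_0}(n_0<\tau_\d)\geq p-\varepsilon_0\delta\geq p/2>0$, so Hoeffding's inequality gives
\begin{align*}
\P\left(\P_{\mu^N_0}(n_0<\tau_\d)\leq \varepsilon_0\delta\right)\leq \P\left(p-\frac1N\sum_{i=1}^N\P_{X^i_0}(n_0<\tau_\d)\geq \frac p2\right)\leq e^{-Np^2/2}=:e^{-N\beta},
\end{align*}
with $\beta:=p^2/2>0$ depending only on $\mu$. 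Plugging this bound and the fixed value of $\delta$ into the conclusion of Theorem~\ref{thm:main-two}, I obtain, for all $n\geq 0$ and all measurable $f$ bounded by $1$,
\begin{align*}
\E\left(\left|\mu^N_n(f)-\E_{\mu_0^N}(f(X_n)\mid n<\tau_\d)\right|\right)\leq \frac{C}{\delta}N^\alpha+C\,e^{-N\beta}.
\end{align*}
Since $\alpha<0$, the exponential term is eventually dominated by $N^\alpha$, so there is a constant $C_\mu>0$ (absorbing $C/\delta$ and a uniform bound on $Ce^{-N\beta}/N^\alpha$) with $\E(\cdots)\leq C_\mu N^\alpha$ for all $N\geq 2$, which is the claim; the exponent $\alpha$ is unchanged.

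There is no serious obstacle here: the only points requiring a word of care are (i) justifying $p>0$ from~\eqref{eq:intro:assu}, which is an immediate Markov-property iteration, and (ii) noting that the choice of $\delta$, hence the constants $\beta$ and $C_\mu$, depend on $\mu$ but are uniform in $n$ and $N$ — which is exactly why the resulting bound is uniform in time. The mildly delicate bookkeeping step is merging $C/\delta\cdot N^\alpha$ and $C e^{-N\beta}$ into a single $C_\mu N^\alpha$: since $e^{-N\beta}=o(N^\alpha)$ as $N\to\infty$ and both sides are finite for each fixed $N\geq 2$, $\sup_{N\geq 2} N^{-\alpha}e^{-N\beta}<\infty$, so this is routine. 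I would also remark that the argument only used Hoeffding/Azuma-type concentration for bounded i.i.d.\ summands, so no further structural assumption on $X$ beyond those already in Theorem~\ref{thm:main-two} is needed.
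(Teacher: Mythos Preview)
Your proposal is correct and follows essentially the same approach as the paper: choose $\delta$ so that $\varepsilon_0\delta\leq \P_\mu(n_0<\tau_\d)/2$, apply a Hoeffding-type concentration inequality to the i.i.d.\ bounded summands $\P_{X^i_0}(n_0<\tau_\d)$ to get the $e^{-N\beta}$ bound on the probability term, and plug into Theorem~\ref{thm:main-two}. Your write-up is in fact more detailed than the paper's paragraph preceding the corollary (you justify $p>0$, make Hoeffding explicit, and spell out the merging of $CN^\alpha/\delta$ with $Ce^{-N\beta}$), but the argument is the same.
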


We emphasize that the above results and their proofs can be adapted to the time-inhomogeneous setting of~\cite{CV2016a}, with appropriate modifications of Assumption~\eqref{eq:additional_assumption}. 

The following result is specific to the time-homogeneous setting and is proved at the end of this section.
\begin{theorem}
\label{thm:invariant-quasi-stationary-distribution} Under the assumptions of Theorem~\ref{thm:main-two}, the particle system $(X^1,\ldots,X^N)$ is exponentially ergodic, which means that it admits a stationary distribution $M^N$ (which is a probability measure on $E^N$) and that there exists positive constants $C_N$ and $\gamma_N$ such that
\begin{align*}
\left\|M^N-\text{Law}(X^1_n,\ldots,X^N_n)\right\|_{TV}\leq C_N e^{-\gamma_N t}.
\end{align*}
Moreover, there exists a positive constant $C>0$ such that,  for all measurable function $f:E\rightarrow\R$ bounded by $1$,
\begin{align}
\label{eq:dist_MN_QSD}
\E_{M^N}\left(\left|\mu^N_0(f)-\nu_{QSD}(f)\right|\right)\leq CN^\alpha,
\end{align}
where $\alpha$ is the same as in Theorem~\ref{thm:main-two} and $(X^1_0,\ldots,X^N_0)$ is distributed following $M^N$.
\end{theorem}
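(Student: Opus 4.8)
The plan is to prove the two assertions of Theorem~\ref{thm:invariant-quasi-stationary-distribution} in turn, using Theorem~\ref{thm:main-two} as the main engine for the quantitative bound~\eqref{eq:dist_MN_QSD}. For the first assertion --- existence of a stationary distribution $M^N$ and exponential ergodicity of the Markov chain $(X^1_n,\ldots,X^N_n)_{n\in\N}$ on $E^N$ --- I would argue as follows. The particle system is a time-homogeneous Markov chain on $E^N$, and under assumption~\eqref{eq:additional_assumption} (together with~\eqref{eq:intro:assu}) it satisfies a Doeblin-type minorisation: starting from any configuration $(x_1,\dots,x_N)\in E^N$, there is a uniformly positive probability that, after $n_0$ steps, every particle has survived $n_0$ steps without rebirth and lands in a ``good'' region; more precisely, I would show that the $n_0$-step transition kernel of $(X^1,\ldots,X^N)$ is bounded below by $\kappa\,\Lambda$ for some constant $\kappa>0$ and some probability measure $\Lambda$ on $E^N$, using independence of the particles between rebirths and the fact that conditionally on no rebirth occurring each particle simply follows $\P_x(X_{n_0}\in\cdot\mid n_0<\tau_\d)$ whose mass under~\eqref{eq:additional_assumption} is controlled. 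A standard Doeblin argument then yields both the existence and uniqueness of $M^N$ and the bound $\|M^N-\mathrm{Law}(X^1_n,\ldots,X^N_n)\|_{TV}\leq C_Ne^{-\gamma_N n}$.

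For the quantitative estimate~\eqref{eq:dist_MN_QSD}, the idea is to exploit stationarity: if $(X^1_0,\ldots,X^N_0)\sim M^N$, then $(X^1_n,\ldots,X^N_n)\sim M^N$ for all $n$, so $\E_{M^N}(\mu^N_0(f))=\E_{M^N}(\mu^N_n(f))$ for every $n$. I would write
\begin{align*}
\E_{M^N}\left(\left|\mu^N_0(f)-\nu_{QSD}(f)\right|\right)
&=\E_{M^N}\left(\left|\mu^N_n(f)-\nu_{QSD}(f)\right|\right)\\
&\leq \E_{M^N}\left(\left|\mu^N_n(f)-\E_{\mu_0^N}(f(X_n)\mid n<\tau_\d)\right|\right)
+\E_{M^N}\left(\left|\E_{\mu_0^N}(f(X_n)\mid n<\tau_\d)-\nu_{QSD}(f)\right|\right),
\end{align*}
where $\mu_0^N$ denotes the (random, $M^N$-distributed) initial empirical measure. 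The first term on the right is bounded using Theorem~\ref{thm:main-two}: it gives $\frac{CN^\alpha}{\delta}+C\,\P(\P_{\mu^N_0}(n_0<\tau_\d)\leq\varepsilon_0\delta)$ for any $\delta>0$. The second term is bounded by $Ce^{-\gamma n}$ uniformly in the initial measure by the exponential mixing hypothesis~\eqref{eq:expo-cv} (applied with $\mu_1=\mu_0^N$ and $\mu_2=\nu_{QSD}$, and using $\nu_{QSD}$'s stationarity under conditioning). Letting $n\to\infty$ kills the second term entirely, leaving
\begin{align*}
\E_{M^N}\left(\left|\mu^N_0(f)-\nu_{QSD}(f)\right|\right)\leq \frac{CN^\alpha}{\delta}+C\,\P\left(\P_{\mu^N_0}(n_0<\tau_\d)\leq\varepsilon_0\delta\right),
\end{align*}
valid for every $\delta>0$.

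It remains to control $\P_{M^N}(\P_{\mu^N_0}(n_0<\tau_\d)\leq\varepsilon_0\delta)$ for a suitable fixed $\delta$, and this is where the main obstacle lies: unlike the i.i.d.\ case treated in the Corollary, under $M^N$ the coordinates $X^1_0,\ldots,X^N_0$ are correlated, so no direct concentration inequality applies. The plan is to bound this probability away from below using the Doeblin minorisation established in the first part together with assumption~\eqref{eq:additional_assumption}. Concretely, since $M^N$ is the stationary law, $M^N = M^N P_{n_0}$ where $P_{n_0}$ is the $n_0$-step kernel; I would use the structure of a single step of the dynamics to show that, with probability bounded below by a constant independent of $N$, a definite fraction of the $N$ particles occupy positions $x$ with $\P_x(n_0<\tau_\d)\geq\varepsilon_0'$ for some $\varepsilon_0'>0$, whence $\P_{\mu^N_0}(n_0<\tau_\d)\geq c\varepsilon_0'$ deterministically on that event --- making $\P_{M^N}(\P_{\mu^N_0}(n_0<\tau_\d)\leq\varepsilon_0\delta)$ small (in fact, for $\delta$ fixed small enough, this probability is bounded by a quantity that, combined with the $N^\alpha$ term, still gives $CN^\alpha$; alternatively one absorbs it since $N^\alpha\to 0$ dominates). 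The cleanest route is probably to iterate: averaging~\eqref{eq:dist_MN_QSD}-type bounds over the chain and using the near-deterministic lower bound on survival probabilities forced by one application of the dynamics from any configuration, one fixes $\delta$ once and for all so that the second probability term is itself $O(N^\alpha)$ or smaller, yielding the claimed $CN^\alpha$ bound. I would flag the uniform-in-$N$ lower bound on $\P_{\mu^N_0}(n_0<\tau_\d)$ under the stationary law as the delicate point, to be handled via the minorisation condition rather than concentration.
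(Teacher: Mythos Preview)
Your overall structure mirrors the paper's, but there are two genuine gaps.

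\medskip
\textbf{Exponential ergodicity.} Your Doeblin argument claims that from any $(x_1,\dots,x_N)$ there is a uniformly positive probability that all particles survive $n_0$ steps without rebirth. This is not available: assumption~\eqref{eq:additional_assumption} does \emph{not} say $\inf_x\P_x(n_0<\tau_\d)>0$, only that after one \emph{conditional} step one lands with probability $\geq c_0$ in a state with survival probability $\geq\varepsilon_0$. If some $x_i$ is near the boundary, $\P_{x_i}(n_0<\tau_\d)$ may be arbitrarily small, so the ``no rebirth'' event has no uniform lower bound. The paper instead invokes Lemma~\ref{lem:controle_pos_part} (whose proof is a nontrivial P\'olya-urn coupling) to guarantee that after \emph{one} step of the particle system, with probability $\geq p_0$ at least one particle sits in a good state. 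It then uses the combinatorics of Algorithm~1 to lower-bound the probability that the bad particles are killed and reborn onto good ones, and only then applies the single-particle minorisation $\P_x(X_{n_0}\in\cdot\mid n_0<\tau_\d)\geq c_1\nu$ from~\cite{CV2016PTRF} to obtain a product minorisation by $\nu^{\otimes N}$. Your sketch skips this mechanism.

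\medskip
\textbf{The bound~\eqref{eq:dist_MN_QSD}.} Here you correctly isolate the obstacle --- controlling $\P_{M^N}(\P_{\mu_0^N}(n_0<\tau_\d)\leq\varepsilon_0\delta)$ --- but your proposed fixes do not close it: the Doeblin minorisation gives no $N$-uniform bound on this probability, and ``absorbing it since $N^\alpha\to 0$ dominates'' is circular (the term in question has no reason to be $o(1)$ in $N$). The paper sidesteps the issue entirely with a simple trick you are missing: rather than starting from $M^N$, pick a single point $x\in E$ with $\P_x(n_0<\tau_\d)>\varepsilon_0\delta$ and start the particle system from $(x,\dots,x)$. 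Then the probability term in Theorem~\ref{thm:main-two} is \emph{zero}, so
\[
\E\left(\left|\mu^N_n(f)-\E_x(f(X_n)\mid n<\tau_\d)\right|\right)\leq \frac{CN^\alpha}{\delta}
\]
for all $n$. Now use the exponential ergodicity just proved to compare $\text{Law}(\mu^N_n)$ with its law under $M^N$ (error $C_Ne^{-\gamma_N n}$), and~\eqref{eq:expo-cv} to compare $\E_x(f(X_n)\mid n<\tau_\d)$ with $\nu_{QSD}(f)$ (error $Ce^{-\gamma n}$). Letting $n\to\infty$ kills both transient terms and yields~\eqref{eq:dist_MN_QSD}. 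The key idea is thus to exploit ergodicity \emph{twice} --- once for the particle system, once for the conditioned semigroup --- starting from a deterministic good state, rather than to estimate the bad-set probability under $M^N$.
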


\begin{proof}[Proof of Theorem~\ref{thm:main-two}]

Using the exponential convergence assumption~\eqref{eq:expo-cv}, we deduce that, for any function $f:E\rightarrow\R$ such that $\|f\|_\infty=1$ and all $n\geq n_1\geq 0$,
\begin{align}
\E\left(\left|\mu^N_n(f)-\E_{\mu_0^N}(f(X_n)\mid n<\tau_\d)\right|\right)&\leq \E\left(\left|\mu^N_n(f)-\E_{\mu_{n-n_1}^N}(f(X_{n})\mid n<\tau_\d)\right|\right)\nonumber\\
&\phantom{\leq \E}+\E\left(\left|\E_{\mu_{n-n_1}^N}(f(X_{n})\mid n<\tau_\d)-\E_{\mu_0^N}(f(X_n)\mid n<\tau_\d)\right|\right)\nonumber\\
&\leq \E\left(\left|\mu^N_n(f)-\E_{\mu_{n-n_1}^N}(f(X_n)\mid n<\tau_\d)\right|\right)+Ce^{-\gamma n_1}.
\label{eq:firststep}
\end{align}
Denoting by $(\cF_n)_{n\in\N}$ the natural filtration of the particle system $(X^1_n,\ldots,X^N_n)_{n\in\N}$, we deduce from Theorem~\ref{thm:intro-main} that, almost surely,
\begin{align*}
\E\left(\left|\mu^N_n(f)-\E_{\mu_{n-n_1}^N}(f(X_n)\mid n<\tau_\d)\right|\,\mid \cF_{n-n_1}\right)&\leq 2\wedge\frac{2(1+\sqrt{2})}{\P_{\mu^N_{n-n_1}}(n_1<\tau_\d)\sqrt{N}}.
\end{align*}
Hence, for all $\varepsilon>0$,
\begin{multline*}
\E\left(\left|\mu^N_n(f)-\E_{\mu_{n-n_1}^N}(f(X_n)\mid n<\tau_\d)\right|\right)\leq 2\P\left(\P_{\mu^N_{n-n_1}}(n_1<\tau_\d)\leq \varepsilon\right)\\
+\frac{2(1+\sqrt{2})}{\varepsilon\sqrt{N}}\P\left(\P_{\mu^N_{n-n_1}}(n_1<\tau_\d)> \varepsilon\right).
\end{multline*}
But~\cite[Theorem~2.1]{CV2016PTRF} entails the existence of a measure $\nu$ on $E$ and positive constants $c_1,n_0>0$ and $c_2 > 0$ such that for any $n \in \mathbb{N}$ and $x \in E$,
\begin{align*}
\left\{\begin{array}{l}
\P_x(X_{n_0}\in\cdot\mid n_0<\tau_\d)\geq c_1\nu(\cdot) \\
\P_\nu(n < \tau_\partial) \geq c_2 \P_x( n < \tau_\partial)
\end{array}
\right.
\end{align*}
Note that, from now on, $n_0$ is a fixed constant. This entails 
\begin{align*}
\P_\mu(n_1<\tau_\d)&\geq \P_\mu(n_0<\tau_\d) c_1\P_{\nu}(n_1-n_0<\tau_\d)\\
&\geq  \P_\mu(n_0<\tau_\d) c_1 c_2 \max_{x\in E}\P_x(n_1-n_0<\tau_\d)\\
&\geq c_1c_2\P_\mu(n_0<\tau_\d)e^{-\lambda_0 (n_1-n_0)},
\end{align*}
where $\lambda_0>0$ is the constant of~\eqref{eq:lambda0}.
We deduce that
\begin{multline}
\E\left(\left|\mu^N_n(f)-\E_{\mu_{n-n_1}^N}(f(X_n)\mid n<\tau_\d)\right|\right)\leq 2\P\left(\P_{\mu^N_{n-n_1}}(n_0<\tau_\d)\leq c_3\varepsilon e^{(n_1-n_0)\lambda_0}\right)\\
+\frac{2(1+\sqrt{2})}{\varepsilon\sqrt{N}}.\label{eq:last_eq}
\end{multline}
with $c_3 = \frac{1}{c_1c_2}$.
Our aim is now to control $\P\left(\P_{\mu^N_{n}}(n_0<\tau_\d)\leq \varepsilon\right)$, uniformly in $n\geq 0$ and for all $\varepsilon\in(0,\varepsilon_0/2)$. In order to do so, we make use of the following lemma, proved at the end of this subsection.

\begin{lemma}
\label{lem:controle_pos_part}
There exists $p_0>0$ and $\delta>0$ such that, for any value of $\mu_0^N$,
\begin{align*}
\P\left(\mu_1^N(\P_\cdot(n_0<\tau_\d))\leq \delta\varepsilon_0\right)\leq 1-p_0.
\end{align*}
Moreover, if $\P_{\mu^N_0}(1<\tau_\d)> \varepsilon$ for some $\varepsilon\in(0,c_0\varepsilon_0)$, then 
\begin{align*}
\P\left(\mu_1^N(\P_\cdot(n_0<\tau_\d))\leq \varepsilon\right)\leq \frac{2(1+\sqrt{2})}{\varepsilon(c_0\varepsilon_0-\varepsilon)\sqrt{N}}.
\end{align*}
\end{lemma}

From this lemma (where we assume without loss of generality that $\delta\leq 1/2)$, from the Markov property applied to the particle system and since $\P_{\mu^N_0}(n_0<\tau_\d)> \varepsilon$ implies $\P_{\mu^N_0}(1<\tau_\d)> \varepsilon$, we deduce that, for any $\varepsilon\in(0,\delta\varepsilon_0)$,
\begin{align}
\P\left(\P_{\mu^N_{n+1}}(n_0<\tau_\d)\leq \varepsilon\right)&\leq \frac{2(1+\sqrt{2})}{\varepsilon(\varepsilon_0/2-\varepsilon)\sqrt{N}}\, \P\left(\P_{\mu^N_{n}}(n_0<\tau_\d)> \varepsilon\right)+(1-p_0)\P\left(\P_{\mu^N_{n}}(n_0<\tau_\d)\leq  \varepsilon\right)\nonumber\\
&\leq  \frac{2(1+\sqrt{2})}{\varepsilon(\varepsilon_0/2-\varepsilon)\sqrt{N}}+(1-p_0)\P\left(\P_{\mu^N_{n}}(n_0<\tau_\d)\leq  \varepsilon\right)\nonumber\\
&\leq \frac{2(1+\sqrt{2})}{p_0\varepsilon(\varepsilon_0/2-\varepsilon)\sqrt{N}}+(1-p_0)^{n+1}\P\left(\P_{\mu^N_{0}}(n_0<\tau_\d)\leq  \varepsilon\right),\label{eq:fromlemma1}
\end{align}
where the last line is obtained by iteration over $n$.

This and equation~\eqref{eq:last_eq} imply that, for any $\varepsilon\in\left(0,\varepsilon_0\delta/(c_3 e^{(n_1-n_0)\lambda_0})\right)$,
\begin{align*}
\E\left(\left|\mu^N_n(f)-\E_{\mu_{n-n_1}^N}(f(X_n)\mid n<\tau_\d)\right|\right)\leq 
&\frac{4(1+\sqrt{2})}{p_0c_3\varepsilon e^{(n_1-n_0)\lambda_0}(\varepsilon_0/2-c_3\varepsilon e^{(n_1-n_0)\lambda_0})\sqrt{N}}\\
&\quad +2(1+\sqrt{2})(1-p_0)^{n-n_1}\P\left(\P_{\mu^N_{0}}(n_0<\tau_\d)\leq  c_3\varepsilon e^{(n_1-n_0)\lambda_0}\right)\\
&\quad +\frac{2(1+\sqrt{2})}{\varepsilon\sqrt{N}}.
\end{align*}
Taking $\varepsilon=\varepsilon_0\delta/(c_3 e^{(n_1-n_0)\lambda_0})$ and assuming, without loss of generality, that $\delta\leq 1/4$, we obtain
\begin{align*}
\E\left(\left|\mu^N_n(f)-\E_{\mu_{n-n_1}^N}(f(X_n)\mid n<\tau_\d)\right|\right)\leq 
&\frac{16(1+\sqrt{2})}{p_0\varepsilon_0^2\delta\sqrt{N}}\\
&\quad +2(1+\sqrt{2})(1-p_0)^{n-n_1}\P\left(\P_{\mu^N_{0}}(n_0<\tau_\d)\leq \varepsilon_0\delta\right)\\
&\quad +\frac{2(1+\sqrt{2})c_2 e^{n_1\lambda_0}}{\varepsilon_0\delta\sqrt{N}}.
\end{align*}
Finally, using inequality~\eqref{eq:firststep} and taking 
\begin{align*}
n_1=\left\lfloor \frac{\ln N}{2\left(\lambda_0+\gamma\right)}\right\rfloor,
\end{align*}
straightforward computations implies the existence of a constant $C>0$ such that, for all $n\geq n_1$,
\begin{align*}
\E\left(\left|\mu^N_n(f)-\E_{\mu_0^N}(f(X_n)\mid n<\tau_\d)\right|\right)&\leq \frac{C N^\alpha}{\delta}+C\P\left(\P_{\mu^N_{0}}(n_0<\tau_\d)\leq \varepsilon_0\delta\right)
\end{align*}
with
\begin{align*}
\alpha={\frac{-\gamma}{2\left(\lambda_0+\gamma\right)}}<0.
\end{align*}

\medskip\noindent
Now, for $n\leq n_1$, we have
\begin{align*}
\E\left(\left|\mu^N_n(f)-\E_{\mu_0^N}(f(X_n)\mid n<\tau_\d)\right|\right)&\leq \E\left(2\wedge\frac{2(1+\sqrt{2})}{\P_{\mu^N_{0}}(n_1<\tau_\d)\sqrt{N}}\right)\\
&\leq \E\left(2\wedge \frac{2(1+\sqrt{2})e^{n_1\lambda_0}}{c_3\P_{\mu^N_{0}}(n_0<\tau_\d)\sqrt{N}}\right)\\
&\leq 2\P\left(\P_{\mu^N_{0}}(n_0<\tau_\d)\leq \varepsilon_0\delta\right)+\frac{2(1+\sqrt{2})e^{n_1\lambda_0}}{\varepsilon_0\delta\sqrt{N}}.
\end{align*}
Using the same computations as above, this concludes the proof of Theorem~\ref{thm:main-two}.
\end{proof}

\bigskip
\begin{proof}[Proof of Lemma~\ref{lem:controle_pos_part}]
Assume that $\P_{\mu^N_0}(1<\tau_\d)> \varepsilon$. We obtain from Theorem~\ref{thm:intro-main} that
\begin{align*}
\E\left(\left|\mu_1^N(\P_\cdot(n_0<\tau_\d))-\E_{\mu^N_0}\left(\P_{X_1}(n_0<\tau_\d)\mid 1<\tau_\d\right)\right|\right)&\leq \frac{2(1+\sqrt{2})\|\P_\cdot(n_0<\tau_\d)\|_\infty}{\varepsilon\sqrt{N}}.
\end{align*}
Markov's inequality thus implies that, for all $\delta>0$,
\begin{align*}
\P\left(\left|\mu_1^N(\P_\cdot(n_0<\tau_\d))-\E_{\mu^N_0}\left(\P_{X_1}(n_0<\tau_\d)\mid 1<\tau_\d\right)\right|\geq \delta\right)\leq \frac{2(1+\sqrt{2})}{\varepsilon\delta\sqrt{N}}
\end{align*}
and hence that
\begin{align*}
\P\left(\mu_1^N(\P_\cdot(n_0<\tau_\d))\leq \E_{\mu^N_0}\left(\P_{X_1}(n_0<\tau_\d)\mid 1<\tau_\d\right)- \delta\right)\leq \frac{2(1+\sqrt{2})}{\varepsilon\delta\sqrt{N}}.
\end{align*}
But, by Assumption~\eqref{eq:additional_assumption}, we have
$\P_{\mu^N_0}\left(\P_{X_1}(n_0<\tau_\d)\geq \varepsilon_0\mid 1<\tau_\d\right)\geq c_0$, so that $\E_{\mu^N_0}\left(\P_{X_1}(n_0<\tau_\d)\mid 1<\tau_\d\right)\geq c_0\varepsilon_0$. We deduce that
\begin{align*}
\P\left(\mu_1^N(\P_\cdot(n_0<\tau_\d))\leq \varepsilon_0c_0- \delta\right)\leq \frac{2(1+\sqrt{2})}{\varepsilon\delta\sqrt{N}}.
\end{align*}
Choosing $\delta=c_0\varepsilon_0-\varepsilon$, we finally obtained
\begin{align*}
\P\left(\mu_1^N(\P_\cdot(n_0<\tau_\d))\leq \varepsilon\right)\leq \frac{2(1+\sqrt{2})}{\varepsilon(c_0\varepsilon_0-\varepsilon)\sqrt{N}}.
\end{align*}

In the general case (when one does not have a good control on $\P_{\mu_0^N}(1<\tau_\d)$), the above strategy is bound to fail since we do not have a good control on the distance between the conditioned semi-group and the empirical distribution of the particle system. As a consequence, we need to take a closer look at Algorithm 1. As explained in the description of this algorithm, the position of the system at time $1$ is computed from the position of the system at time $0$ through several steps, each step being composed of two stages. We denote by $\bar{k}$ the number of steps needed to compute the position of the system at time $1$. 

 For any step $k\geq 1$, we denote by $X^{i,k}_0$ the position of the $i^{th}$ particle at the beginning of step $k$, by $b_i^{k}$ the state of the $i^{th}$ particle at the beginning of step $k$, and by $i_k\in \{i\in\{1,\ldots,N\},\,b^k_i=0\}$ the index of the particle chosen during the first stage of step $k$. With this notation, the process $\left((X^{i,k\wedge\bar{k}}_0,b_i^{k\wedge \bar{k}})_{i\in\{1,\ldots,N\}},i_{k\wedge\bar{k}}\right)_{k\in\N}$ is a Markov chain. In what follows, we denote by $(\cG_k)_{k\geq 1}$ the natural filtration of this Markov chain.

  We also introduce the quantities
\begin{align*}
N_k&=\sharp\{i\in\{1,\ldots,N\},\,b^k_i=1\text{ (at the beginning of the $k^{th}$ step)}\},\\
N'_k&=\sharp\{i\in\{1,\ldots,N\},\,b^k_i=1\text{ and }\P_{X^{i,k}_0}(n_0<\tau_\d)\geq \varepsilon_0\},\\
N''_k&=\sharp\{i\in\{1,\ldots,N\},\,b^k_i=1\text{ and }\P_{X^{i,k}_0}(n_0<\tau_\d)< \varepsilon_0\}.
\end{align*}
Of course, we have $N_k=N'_k+N''_k$ and, at the beginning of the first step, one has
$
N_1=N'_1=N''_1=0.
$
For any $k\geq 1$, conditionally to $\cG_k$ and on the event $k\leq \bar{k}$, the position of $X_0^{i_k,k+1}$  is chosen with respect to
\begin{align*}
\P_{X_0^{i_k,k}}\left(X_1\in\cdot\right)+\P_{X_0^{i_k,k}}(\tau_\d\leq 1)\frac{1}{N-1}\sum_{i=1,i\neq i_k}^N \delta_{X^{i,k}}.
\end{align*}
Hence, conditionally to $\cG_k$ and on the event $k\leq \bar{k}$, $(N'_{k+1},N''_{k+1})$ is equal to
\begin{align}
\label{eq:prob1}
\begin{cases}
(N'_{k},N''_{k})&\text{with prob. }\P_{X_0^{i_k,k}}(\tau_\d\leq 1)\frac{N-1-N_k}{N-1},\\
(N'_{k}+1,N''_{k})&\text{with prob. }\P_{X_0^{i_k,k}}\left(\P_{X_1}(n_0<\tau_\d)\geq \varepsilon_0\right)+\P_{X_0^{i_k,k}}(\tau_\d\leq 1)\frac{N'_k}{N-1},\\
(N'_k,N''_k+1)&\text{with prob. }\P_{X_0^{i_k,k}}\left(0<\P_{X_1}(n_0<\tau_\d)< \varepsilon_0\right)+\P_{X_0^{i_k,k}}(\tau_\d\leq 1)\frac{N''_k}{N-1}.
\end{cases}
\end{align}
From Assumption~\eqref{eq:additional_assumption}, we deduce that, conditionally to $\cG_k$ and on the event $k\leq \bar{k}$, $(N'_{k+1},N''_{k+1})$ is equal to
\begin{align}
\label{eq:prob2}
\begin{cases}
(N'_{k},N''_{k})&\text{with prob. }\P_{X_0^{i_k,k}}(\tau_\d\leq 1)\frac{N-1-N_k}{N-1},\\
(N'_{k}+1,N''_{k})&\text{with prob. }\geq c_0\P_{X_0^{i_k,k}}(1<\tau_\d)+\P_{X_0^{i_k,k}}(\tau_\d\leq 1)\frac{N'_k}{N-1},\\
(N'_{k},N''_{k}+1)&\text{with prob. }\leq (1-c_0)\P_{X_0^{i_k,k}}(1<\tau_\d)+\P_{X_0^{i_k,k}}(\tau_\d\leq 1)\frac{N''_k}{N-1}.
\end{cases}
\end{align}
Let us denote by $k_1,\ldots,k_n,\ldots$ the successive step numbers during which the sequence $(N_k)_{k\geq 1}$ jumps, that is
\begin{align*}
&k_1=\inf \{k\geq 1,\,N_{k+1}\neq N_k\},\\
&k_{n+1}=\inf \{k\geq k_n+1,\,N_{k+1}\neq N_k\},\ \forall n\geq 1.
\end{align*}
It is clear that, for all $n\geq 1$, $k_n+1$ is a stopping time with respect to the filtration~$\cG$.
We are interested in the sequence of random variables $(U_n)_{n\geq 1}$, defined by
\begin{align*} 
 U_n=N'_{k_n+1},\ \forall n\in \{1,\ldots,N\}.
\end{align*}
Conditionally to $\cG_{k_n+1}$ (the filtration $(\cG_k)_{k\geq 1}$ before the stopping time $k_n+1$), we deduce from~\eqref{eq:prob1} that, for all $x\in E$,
\begin{align*}
\P(U_{n+1}=U_n+1\mid\cG_{k_n+1},X_0^{i_{k_{n+1}},k_{n+1}}=x)&\geq \frac{\P_{x}(1<\tau_\d)c_0+\P_{x}(\tau_\d\leq 1)\frac{N'_{k_n+1}}{N-1}}{\P_{x}(1<\tau_\d)+\P_{x}(\tau_\d\leq 1)\frac{N_{k_n+1}}{N-1}}\\
&\geq c_0\wedge \frac{N'_{k_n+1}}{N_{k_n+1}}=c_0\wedge \frac{U_n}{n},
\end{align*}
since $N_{k_n+1}=n$ almost surely. We deduce that
\begin{align*}
\P(U_{n+1}=U_n+1\mid U_1,\ldots,U_n)&\geq c_0\wedge \frac{U_n}{n}\\
\P(U_{n+1}=U_n\mid U_1,\ldots,U_n)&=1-\P(U_{n+1}=U_n+1\mid U_1,\ldots,U_n)
                                  \leq 1-c_0\wedge\frac{U_n}{n}.
\end{align*}
Moreover, Assumption~\eqref{eq:additional_assumption} entails that $\P(U_1=1)\geq c_0> 0$.

As a consequence, there exists a coupling between $(U_n)_{n\in\N}$ and the Markov chain $(V_n)_{n\geq 1}$ with initial law $\P(V_1=1)=c_0$ and transition probabilities
\begin{align*}
\P(V_{n+1}=V_n+1\mid V_1,\ldots,V_n)&= c_0\wedge\frac{V_n}{n},\\
\P(V_{n+1}=V_n\mid V_1,\ldots,V_n)&=1-\P(V_{n+1}=V_n+1\mid V_1,\ldots,V_n)
\end{align*}
such that $U_n\geq V_n$, for all $n\in\{1,\ldots,N\}$. The process $(V_n/n)_{n\in\N}$ is a positive super-martingale (and a martingale if $c_0=1$) and hence it converges to a random variable $R_\infty$ almost surely as $n\rightarrow\infty$. Let us now prove that $R_\infty$ is not equal to zero almost surely.

Consider a P\'olya urn starting with $n_0$ balls with one white one, that is a Markov chain $(W_n)_{n\in\N}$ in $\N$ such that $W_0=1$ and
\begin{align*}
\P(W_{n+1}&=W_n+1\mid W_n)=\frac{W_n}{n+n_0},\ \forall n\geq 0,\\
\P(W_{n+1}&=W_n\mid W_n)=1-\frac{W_n}{n+n_0},\ \forall n\geq 0.
\end{align*}
It is well known that $(W_n/n)_{n\in\N}$ is a positive and bounded martingale which converges almost surely to a random variable $S_\infty$ distributed following a Beta distribution with parameters $(1,n_0-1)$. In particular, this implies that the event $\{W_k/k\leq c_0,\,\forall k\geq 0\}$ has a positive probability and that, conditionally to this event, $W_n/n$ converges to a positive random variable :
\begin{align*}
\mathbf{1}_{W_k/k\leq c_0,\,\forall k\geq 0}\frac{W_n}{n}\xrightarrow[n\rightarrow\infty]{a.s.} \mathbf{1}_{W_k/k\leq c_0,\,\forall k\geq 0}\,S_\infty.
\end{align*}
Since $(V_{n+n_0})_{n\in\N}$ and $(W_n)_{n\in\N}$ have the same transition probabilities at time $n$ from states $u\in\N$ such that $u/n\leq c_0$, there exists a coupling such that
\begin{align*}
\mathbf{1}_{V_0\geq 1}V_{n+n_0}\geq \mathbf{1}_{V_0\geq 1}\,\mathbf{1}_{W_k/k\leq c_0,\,\forall k\geq 0}\,W_n,\ \forall n\in\N\text{ almost surely,}
\end{align*}
and hence such that
\begin{align*}
\mathbf{1}_{V_0\geq 1}R_\infty\geq \mathbf{1}_{V_0\geq 1}\,\mathbf{1}_{W_k/k\leq c_0,\,\forall k\geq 0}\,S_\infty\text{ almost surely.}
\end{align*}
Since the right hand side is positive with positive probability, we deduce that $R_\infty$ is positive with positive probability. But $R_\infty>0$ implies that $\inf_{n\geq 0} V_n/n>0$, thus there exists $\delta>0$ such that 
\begin{align*}
\P\left(\inf_{n\geq 0}\frac{V_n}{n}> \delta\right)>0.
\end{align*}
Because of the relation between $V_n$ and $U_n$, we deduce that
\begin{align*}
p_0:=\P\left(\frac{N'_{k_N}}{N}> \delta\right)=\P\left(\frac{U_N}{N}> \delta\right)>0.
\end{align*}
By definition of $N'_{k_N}$, $\frac{N'_{k_N}}{N}> \delta$ implies $\mu_1^N(\P_\cdot(n_0<\tau_\d))> \delta\varepsilon_0$ and hence
\begin{align*}
\P\left(\mu_1^N(\P_\cdot(n_0<\tau_\d))\leq \delta\varepsilon_0\right)\leq 1-p_0.
\end{align*}
This concludes the proof of Lemma~\ref{lem:controle_pos_part}.

\end{proof}

\bigskip
\begin{proof}[Proof of Theorem~\ref{thm:invariant-quasi-stationary-distribution}]
We first prove the exponential ergodicity of the particle system and then deduce~\eqref{eq:dist_MN_QSD}.

Using Lemma~\ref{lem:controle_pos_part}, we know that, for any initial distribution of $(X^1_0,\ldots,X^N_0)$, 
\begin{align*}
\P(\P_{\mu_1^N}(n_0<\tau_\d)> \delta\varepsilon_0)\geq p_0.
\end{align*}
On the event $\P_{\mu_1^N}(n_0<\tau_\d)> \delta\varepsilon_0$, there exists at least one particle satisfying $\P_{X^i_1}(n_0<\tau_\d)>\delta\varepsilon_0$. Let us denote by $I\subset\{1,\ldots,N\}$ the set of  indexes of such particles and by $J=\{1,\ldots,N\}\setminus I$ the set of indexes $j$ such that  $\P_{X^j_1}(n_0<\tau_\d)\leq \delta\varepsilon_0$.

The probability that the $\sharp J$ first steps of Algorithm~1 concern the indexes of~$\sharp J$ in strictly increasing order is strictly lowered by $1/N^{\sharp J}$ and hence by $1/N^N$. For each of this step, the probability that the chosen particle with index in $J$ is killed and then is sent to  the position of a particle with index in $I$ is lowered by $(1-\delta\varepsilon_0)\sharp I/N$ and hence by $(1-\delta\varepsilon_0)/N$. Overall, the probability that, after the $\sharp J$ first steps of Algorithm~1, all the particles with index in $J$ have jumped on a particle with index in $I$ is bounded below by $1/N^N\times (1-\delta\varepsilon_0)^{\sharp J}/N^{\sharp J}$ and hence by $(1-\delta\varepsilon_0)^N/N^{2N}$.

On this event, the probability that, for each next step in the algorithm up to time $n_0+1$, the chosen particle jumps without being absorbed is bounded below by $(\delta\varepsilon_0)^N$. But, using~\cite[Theorem~2.1]{CV2016PTRF}, we know that, under Assumption~\ref{eq:expo-cv}, there exist a probability measure $\nu$ on $E$ and a constant $c_1>0$ such that $\P_x(X_{n_0}\in\cdot\mid n_0<\tau_\d)\geq c_1\nu(\cdot)$. Since the particles are independent on the event where none of them is killed, we finally deduce that the distribution of the particle system at time $n_0+1$ satisfies
\begin{align*}
\P\left((X^1_{n_0+1},\ldots,X^1_{n_0+1})\in \cdot\right)\geq p_0(\delta\varepsilon_0^N(1-\delta\varepsilon_0)^N/N^{2N}c_1^N\nu^{\otimes N}(\cdot).
\end{align*}
Classical coupling criteria (see for instance~\cite{Meyn2009}) entails the exponential ergodicity of the particle system.

\medskip
Let us now prove that~\eqref{eq:dist_MN_QSD} holds. Consider $x\in E$ and $\delta >0$ such that $\P_x(n_0<\tau_\d)>\varepsilon_0\delta$. Then, applying Theorem~\ref{thm:main-two} to the particle system with initial position $(X^1_0,\ldots,X^N_0)=(x,\ldots,x)$, we deduce that, for all $n\geq 0$,
\begin{align*}
\E\left(\left|\mu^N_n(f)-\E_{x}(f(X_n)\mid n<\tau_\d)\right|\right)&\leq \frac{C N^\alpha}{\delta}.
\end{align*}
Using the exponential ergodicity of the particle system and the exponential convergence~\eqref{eq:expo-cv}, we deduce that
\begin{align*}
\E_{M^N}\left(\left|\mu^N_0(f)-\nu_{QSD}(f)\right|\right)\leq \frac{CN^\alpha}{\delta}+2C_Ne^{-\gamma_N n}+Ce^{-\gamma n}.
\end{align*}
Letting $n$ tend toward infinity implies~\eqref{eq:dist_MN_QSD} and concludes the proof of Theorem~\ref{thm:invariant-quasi-stationary-distribution}. 
\end{proof}

\section{Example: Absorbed neutron transport process}

\label{sec:example_neutron}

The propagation of neutrons in fissible media is typically modeled by neutron transport systems, where the trajectory of the particle
is composed of straight exponential paths between random changes of directions~\cite{DautrayLions1993,Zoia2011}.  The behavior of a neutron before its absorption by a medium is related to the behavior of neutron tranport before extinction,
where extinction corresponds to the exit of a neutron from a bounded set $D$.

We recall the setting of the neutron transport process studied in~\cite{CV2016PTRF}. Let $D$ be an open connected bounded domain of
$\mathbb{R}^2$, let $S^2$ be the unit sphere of $\mathbb{R}^2$ and $\sigma(du)$ be the uniform probability measure on $S^2$. We
consider the Markov process $(X_t,V_t)_{t\geq 0}$ in $D\times S^2$ constructed as follows: $X_t=\int_0^t V_s\,ds$ and the velocity
$V_t\in S^2$ is a pure jump Markov process, with constant jump rate $\lambda>0$ and uniform jump probability distribution $\sigma$.
In other words, $V_t$ jumps to i.i.d. uniform values in $S^2$ at the jump times of a Poisson process. At the first time where
$X_t\not\in D$, the process immediately jumps to the cemetery point $\d$, meaning that the process is absorbed at the boundary of
$D$. An example of path of the process $(X,V)$ is shown in Fig.~\ref{fig:sample-path}. For all $x\in D$ and $u\in S^2$, we denote by
$\mathbb{P}_{x,u}$ (resp.\ $\mathbb{E}_{x,u}$) the distribution of $(X,V)$ conditionned on $(X_0,V_0)=(x,u)$ (resp.\ the expectation
with respect to $\mathbb{P}_{x,u}$).

\begin{figure}[h]
  \begin{center}
    \vspace{0.5cm}
    \includegraphics[height=6.5cm]{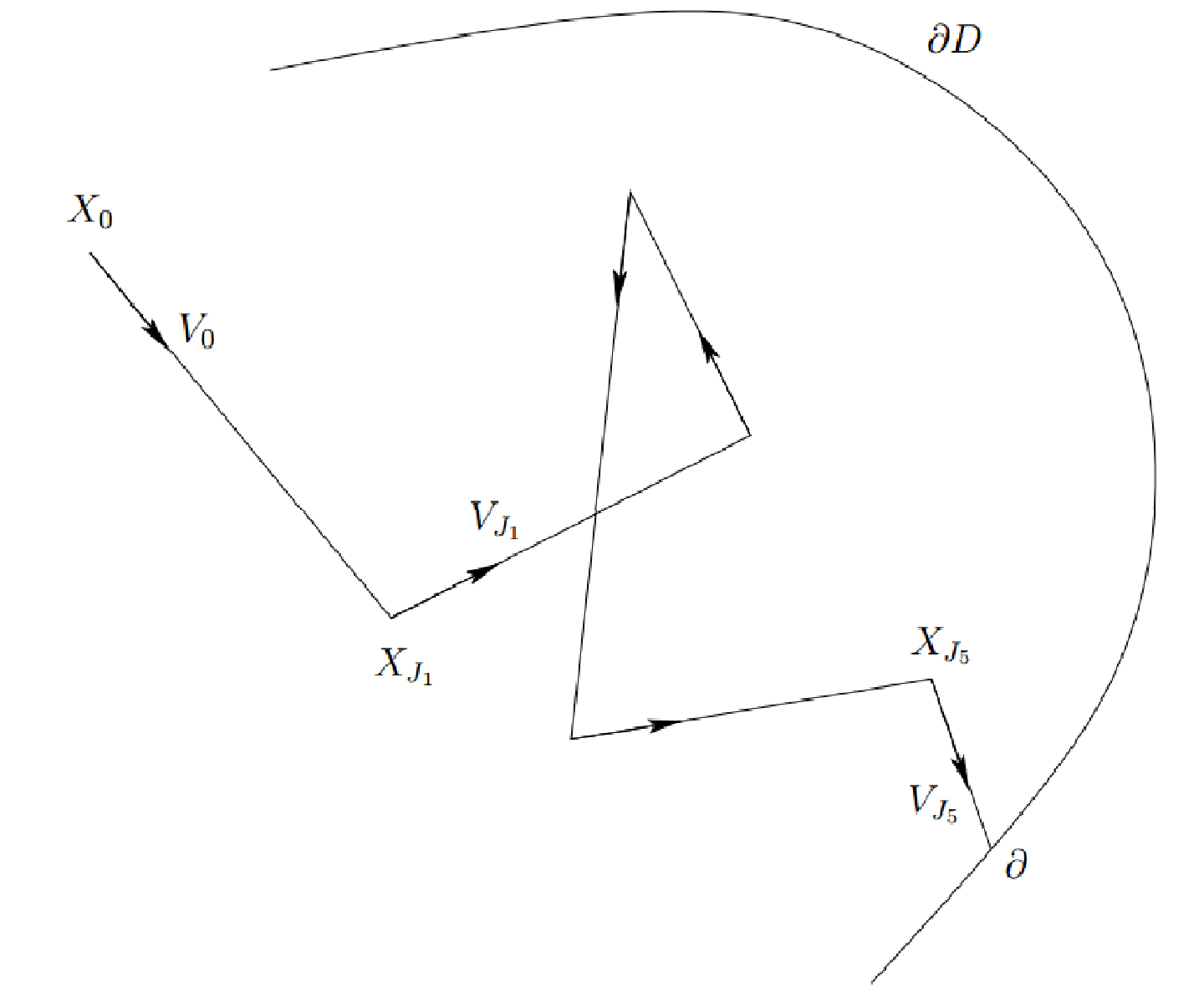}
  \end{center}
  \caption{A sample path of the neutron transport process $(X,V)$. The times $J_1<J_2<\ldots$ are the successive jump times of $V$.}
  \label{fig:sample-path}
\end{figure}

We also assume the following condition on the boundary of the bounded open set $D$. This is an interior cone type condition satisfied for example by convex open sets of $\R^2$ and by open sets with $C^2$ boundaries.

\paragraph{Assumption (H) on $D$.}
We assume that there exists $\varepsilon>0$ such that
\begin{itemize}
\item $D_{\varepsilon}:=\{x\in D:d(x,\partial D)>\varepsilon\}$ is non-empty and connected;
\item there exists $0<s_\varepsilon<t_\varepsilon$ and $\underline{\sigma}>0$ such that, for all $x\in D\setminus
  D_\varepsilon$, there exists $K_x\subset S^2$ measurable such that $\sigma(K_x)\geq \underline{\sigma}$ and for all $u\in K_x$,
  $x+su\in D_\varepsilon$ for all $s\in[s_\varepsilon,t_\varepsilon]$ and $x+su\not\in \partial D$ for all $s\in[0,s_\varepsilon]$.
\end{itemize}

\bigskip

The next proposition implies that the approximation method introduced in this paper converges uniformly in time toward the conditional distribution of $(X,V)$. 

\begin{proposition}
\label{prop:neutron}
The absorbed Markov process $(X,V)$ satisfies the assumptions of Theorem~\ref{thm:main-two}.
\end{proposition}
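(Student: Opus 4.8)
The plan is to verify the two hypotheses of Theorem~\ref{thm:main-two} for the neutron transport process $(X,V)$, namely the exponential convergence~\eqref{eq:expo-cv} and the non-degeneracy condition~\eqref{eq:additional_assumption}. The first of these is essentially already available: it was established in~\cite{CV2016PTRF} for exactly this process under Assumption~(H), so I would simply invoke that result and recall that it yields the constants $C,\gamma>0$ in~\eqref{eq:expo-cv}, the measure $\nu$ and the constants $c_1,n_0,c_2$ of the associated Doeblin-type condition, as well as the exponent $\lambda_0>0$ of~\eqref{eq:lambda0}. The real work is to prove~\eqref{eq:additional_assumption}, i.e. to show that for every $n_0\geq 1$ there is $\varepsilon_0>0$ with
\begin{align*}
\inf_{(x,u)\in D\times S^2}\P_{x,u}\left(\P_{(X_1,V_1)}(n_0<\tau_\d)\geq \varepsilon_0 \mid 1<\tau_\d\right)=c_0>0.
\end{align*}

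The key geometric observation is that, because $V$ jumps at rate $\lambda$ to i.i.d.\ uniform directions, from any starting point $(x,u)$ one can, with probability bounded below uniformly in $(x,u)$, steer the process during the time interval $[0,1]$ into a fixed ``good'' compact region well inside $D$ from which survival up to any fixed time $n_0$ has probability bounded below. Concretely, I would: (i) use Assumption~(H) to identify, for a point near the boundary, a set $K_x$ of directions of $\sigma$-measure at least $\underline\sigma$ that carry the particle into $D_\varepsilon$ and keep it in $D$ for a short time; (ii) show that with probability bounded below (uniformly in $(x,u)$) the process has, by time $1$, at least one velocity jump, lands in a direction in the relevant good cone, travels without leaving $D$, and at time $1$ is located in a fixed compact subset $D'\subset D_\varepsilon$ with a velocity in a fixed subset of $S^2$ — this uses that $D$ is bounded (so the particle cannot have ``run away''), that $D_\varepsilon$ is connected, and a chaining of finitely many jump-and-travel moves whose number depends only on the geometry of $D$; (iii) combine this with a lower bound, uniform over $(y,v)\in D'\times(\text{good directions})$, on $\P_{(y,v)}(n_0<\tau_\d)$, which again follows from Assumption~(H) and the irreducibility established in~\cite{CV2016PTRF} (or directly: from $D'$ one can survive $n_0$ units of time with probability bounded below by forcing enough jumps to keep the particle inside $D$). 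The conditioning on $\{1<\tau_\d\}$ only helps, since $\P_{x,u}(1<\tau_\d)\leq 1$; in fact the event we construct is a sub-event of $\{1<\tau_\d\}$, so the conditional probability is bounded below by the unconditional probability of that event, which is $\geq c_0$.

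A few technical points need care. First, I must make sure the lower bound in step~(ii) is genuinely uniform in the starting point $(x,u)$, including points arbitrarily close to $\partial D$; here Assumption~(H) is exactly what prevents a ``pinching'' of the survival probability near the boundary, because it guarantees a uniform set of escape directions of measure at least $\underline\sigma$ and a uniform travel time $[s_\varepsilon,t_\varepsilon]$ into $D_\varepsilon$. Second, since a single jump may not suffice to reach $D'$ from an arbitrary point of $D$, I expect to iterate the jump-travel step a bounded number $m=m(D)$ of times within $[0,1]$; the probability of having at least $m$ Poisson jumps in a subinterval of length $1/m$, each landing in the prescribed direction set and each keeping the particle in $D$, is bounded below by a positive constant depending only on $\lambda$, $m$, $\underline\sigma$ and the geometry. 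Third, one should phrase everything in the product space $D\times S^2$ and check measurability of $(x,u)\mapsto \P_{(x,u)}(n_0<\tau_\d)$, which is routine. The main obstacle is step~(ii): making the uniform controllability argument into $D'$ precise and quantitative, i.e.\ showing that a bounded number of jump-travel moves, each of uniformly positive probability, suffices to bring any $(x,u)$ into the fixed good region by time $1$ while never exiting $D$ — this is where the connectedness of $D_\varepsilon$ and the cone condition of Assumption~(H) are used in an essential way, and where one must be careful that the construction does not degenerate for starting points deep inside $D$ versus those near the boundary.
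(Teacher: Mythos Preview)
Your proposal has a genuine gap in the treatment of the conditioning. You write that ``the conditioning on $\{1<\tau_\d\}$ only helps, since $\P_{x,u}(1<\tau_\d)\leq 1$; in fact the event we construct is a sub-event of $\{1<\tau_\d\}$, so the conditional probability is bounded below by the unconditional probability of that event, which is $\geq c_0$.'' The algebraic inequality is correct, but the claim that the \emph{unconditional} probability of your steering event is bounded below uniformly in $(x,u)$ is false. Take $x$ close to $\partial D$ with $u$ pointing outward, so that the straight-line exit time $s=s(x,u)$ is arbitrarily small. To avoid absorption the process must jump before time $s$, and $\P(J_1<s)=1-e^{-\lambda s}\to 0$ as $s\to 0$. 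Thus every event you can build inside $\{1<\tau_\d\}$ has unconditional probability at most $1-e^{-\lambda s}$, and no uniform positive lower bound on the unconditional probability is available. Assumption~(H) does not rescue this: it controls the \emph{set of good directions after a jump}, not the probability of jumping in time.

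The paper's proof handles exactly this degeneracy by exploiting the conditioning rather than discarding it. In the bad case $x+s(x,u)u\in\partial D$ with $s\leq 1$, one has both
\begin{align*}
\P_{(x,u)}(1+n_0<\tau_\d)\ \gtrsim\ (1-e^{-\lambda s})\cdot(\text{uniform constant})
\quad\text{and}\quad
\P_{(x,u)}(1<\tau_\d)\leq 1-e^{-\lambda s},
\end{align*}
so the small factor $(1-e^{-\lambda s})$ cancels in the conditional probability $\P_{(x,u)}(1+n_0<\tau_\d\mid 1<\tau_\d)$. This cancellation is the crux of the argument; your outline misses it. A secondary remark: the multi-jump chaining into a compact $D'$ is unnecessary here---one case split (does the initial ray hit $\partial D_\varepsilon$ or $\partial D$ first?) and at most one jump suffice, followed by the uniform survival bound on $D_\varepsilon\times S^2$ supplied by~\cite{CV2016PTRF}.
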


\begin{proof}[Proof of Proposition~\ref{prop:neutron}]
Theorem~4.3 from~\cite{CV2016PTRF} states that the Markov process $(X,V)$ satisfies the exponential convergence condition~\eqref{eq:expo-cv}. It only remains to check that that Assumption~\eqref{eq:additional_assumption} is also satisfied.

 By~\cite[(4.3)]{CV2016PTRF}, for any $n_0\geq 1$, there exists a constant $c_{n_0}>0$ such that
\begin{align}
\label{eq:def-c_n0}
c_{n_0}:=\inf_{(x,u)\in D_\varepsilon\times S^2} \P_{(x,u)}(1+n_0< \tau_\d)>0.
\end{align}
In particular, we deduce that, for all $n_0\geq 0$,
\begin{align}
\label{eq:add-assumption-neutron1}
\P_{(x,u)}(\P_{(X_1,V_1)}(n_0<\tau_\d)\mid 1<\tau_\d)\geq c_{n_0},\quad\forall (x,u)\in D_{\varepsilon}\times S^2.
\end{align}
Now, fix $(x,u)\notin D_\varepsilon\times S^2$ and consider the first (deterministic) time $s(x,u)$ when the ray starting from $x$ with direction $u$ hits $D_\varepsilon$ or $\d D$, defined by
\begin{align*}
s(x,u)=\inf\{t\geq 0\,,\text{ such that }x+tu\in D_\varepsilon\cup \d D\}.
\end{align*}
Let us first assume that $x+s(x,u)u\in\d D_\varepsilon$. Then $x+(s(x,u)+\eta)u\in D_\varepsilon$ for some $\eta>0$ and hence
\begin{align*}
\P_{(x,u)}(X_{s(x,u)+\eta}\in D_\varepsilon)&\geq \P\left(J_1>s(x,u)+\eta\right)= e^{-(s(x,u)+\eta)}\geq e^{-\text{diam}(D)},
\end{align*}
where $J_1<J_2<\cdots$ denotes the successive jump times of the process $V$. Using the Markov property, we deduce from~\eqref{eq:def-c_n0} that
\begin{align}
\label{eq:not-conditioned1}
\P_{(x,u)}(1+n_0<\tau_\d)\geq \P(s+\eta+1+n_0<\tau_\d)\geq e^{-\text{diam}(D)}\,c_{n_0}.
\end{align}
This implies that, for all $n_0\geq 1$ and all $(x,u)\notin D_{\varepsilon}\times S^2$ such that $x+s(x,u)u\in\d D_\varepsilon$,
\begin{align}
\label{eq:add-assumption-neutron2}
\P_{(x,u)}(\P_{(X_1,V_1)}(n_0<\tau_\d)\mid 1<\tau_\d)\geq e^{-\text{diam}(D)} c_{n_0}.
\end{align}
Let us now assume that $x+s(x,u)u\in\d D$. Using Assumption~(H), we obtain
\begin{align*}
\P_{(x,u)}(\exists t\in [0,s)\text{ s.t. } X_t+s(X_t,V_t)V_t\in \d D_\varepsilon)&
\geq \P_{(x,u)} (J_1<s,\,V_{J_1}\in K_{x+uJ_1},\,J_2>s)\\
&\geq (1-e^{-s})\underline{\sigma}e^{-s}.
\end{align*}
Using the strong Markov property and~\eqref{eq:not-conditioned1}, we deduce that
\begin{align*}
\P_{(x,u)}(1+n_0<\tau_\d)&\geq (1-e^{-s})\underline{\sigma}e^{-s}e^{-\text{diam}(D)}c_{n_0}\geq (1-e^{-s\wedge 1})\underline{\sigma}e^{-2\text{diam}(D)}c_{n_0}.
\end{align*}
If $s\leq 1$, then $1<\tau_\d$ implies that the process jumps at least one time before reaching $\d D$, that is before time $s$, so that $\P_{(x,u)}(1<\tau_\d)\leq \P(J_1\leq s)=1-e^{-s}$. Using the Markov property, we deduce that, for $s\leq 1$ or $s>1$, 
\begin{align*}
\P_{(x,u)}(\P_{(X_1,V_1)}(n_0<\tau_\d)\mid 1<\tau_\d)&=
\P_{(x,u)}(1+n_0<\tau_\d\mid 1<\tau_\d)\\
&\geq (1-e^{-1})\underline{\sigma}e^{-2\text{diam}(D)}c_{n_0}.
\end{align*}
This, Equations~\eqref{eq:add-assumption-neutron1} and~\eqref{eq:add-assumption-neutron2} together imply that Assumption~\eqref{eq:additional_assumption} is fulfilled for all $(x,u)\in D,S^2$. This concludes the prood of Proposition~\ref{prop:neutron}.
\end{proof}

\end{document}